
\documentclass[10pt]{amsart}
\usepackage{amsfonts}
\usepackage{amsthm,amsmath,amssymb,mathrsfs,yhmath}
\usepackage{tikz}
\usepackage{pstricks,pst-node, pst-text,pst-3d,graphicx}
\usepackage{xcolor}
\usepackage{anysize}
\usepackage{enumerate}
\usepackage{verbatim}
\usepackage[active]{srcltx}

\setcounter{MaxMatrixCols}{10}

\marginsize{2.5cm}{3cm}{3cm}{4cm}
\usetikzlibrary{trees}
\newtheorem{theorem}{Theorem}
\newtheorem{corollary}[theorem]{Corollary}
\newtheorem{lemma}[theorem]{Lemma}
\newtheorem{proposition}[theorem]{Proposition}
\newtheorem{example}[theorem]{Example}

\theoremstyle{definition}

\newtheorem{claim}{Claim}
\theoremstyle{remark}
\newtheorem{remark}{Remark}
\newtheorem*{rem}{Remark}

\begin{document}
\title{Assouad dimensions of complementary sets}
\author[I. Garcia]{Ignacio Garcia}
\address{Dept. of Pure Mathematics, University of Waterloo, Waterloo, Ont. N3L 2G1,
Canada \\
Depto. de Matem\'atica, Facultad de Ciencias Exactas y Naturales, and
Instituto de Investigaciones F\'isicas de Mar del Plata (CONICET),
Universidad Nacional de Mar del Plata, Argentina }
\email{nacholma@gmail.com}
\author[K. Hare]{Kathryn Hare}
\address{Dept. of Pure Mathematics, University of Waterloo, Waterloo, Ont.  N3L 2G1,
Canada}
\email{kehare@uwaterloo.ca}
\author[F. Mendivil]{Franklin Mendivil}
\address{Dept. of Mathematics and Statistics, Acadia University, Wolfville,
Nova Scotia, Canada, B4P 2R6}
\email{franklin.mendivil@acadiau.ca}
\thanks{The work of I. Garcia was partially supported by a grant from the
Simons Foundation. The work of K. Hare was supported by NSERC 2011-44597. The work of F. Mendivil was supported by NSERC 238549.}
\subjclass[2010]{28A78, 28A80}
\keywords{Assouad dimension, complementary sets, Cantor sets}
\maketitle

\begin{abstract}
Given a positive, decreasing sequence $a,$ whose sum is $L$, we consider all
the closed subsets of $[0,L]$ such that the lengths of their complementary
open intervals are in one to one correspondence with the sequence $a$. The
aim of this note is to investigate the possible values that Assouad-type
dimensions can attain for this class of sets. In many cases, the set of
attainable values is a closed interval whose endpoints we determine.
\end{abstract}


\section{Introduction}

In this paper, we are interested in the Assouad dimension, introduced in 
\cite{A1, A2} by Assouad, and its `dual', the Lower Assouad dimension,
introduced in \cite{larman1967new} by Larman. Assouad dimensions were
initially of interest because of their application in the theory of
embeddings of metric spaces in Euclidean spaces and in the study of
quasisymmetric maps (see \cite{Heinonen}, \cite{mackay2010conformal} and
references therein). Recently, the Assouad-type dimensions have received
much attention in the fractal geometry community (c.f., \cite%
{chen2016accessible}, \cite{FHORM}, \cite{Fraser}, \cite{fraser2015assouad}, 
\cite{mackay2011assouad}) as they provide quantitative information on the
extreme local behaviour of the geometry of the underlying set. Indeed, if we
denote by $\dim _{A}E$ and $\dim _{L}E$ the Assouad and Lower Assouad
dimensions of the compact set $E$ respectively, then the following relations
hold between these dimensions and the more familiar Hausdorff, packing and
upper box dimensions:%
\begin{equation*}
\dim _{L}E\leq \dim _{H}E\leq \dim _{P}E\leq \overline{\dim }_{B}E\leq \dim
_{A}E\text{.}
\end{equation*}

Given $a=\{a_{j}\},$ a positive, decreasing sequence with finite sum $L$, we
define the class $\mathscr{C}_{a}$ to be the family of all closed subsets of 
$[0,L]$ whose complement (in $[0,L]$) is comprised of disjoint open
intervals with lengths given by the entries of $a$. We call such sets the
complementary sets of $a$. Notice that every compact subset of $\mathbb{R}$
of zero Lebesgue measure belongs to exactly one $\mathscr{C}_{a}$ and every
set in $\mathscr{C}_{a}$ has Lebesgue measure zero. Each family, $\mathscr{C}%
_{a},$ contains both countable and uncountable sets, hence it is natural to
ask about the dimensions of the sets in a given family.

This problem was first studied by Besicovitch and Taylor \cite{BT} for the
case of the Hausdorff dimension. They proved that the set of attained values
for the Hausdorff dimension of sets in $\mathscr{C}_{a}$ was the closed
interval $[0,\dim _{H}C_{a}],$ where $C_{a}$ is the Cantor set associated to
the sequence $a$. (For its definition see section 2.) A similar result was
discovered by Zuberman and the last two authors in \cite{HMZ} in the case of
packing measure. In contrast, it is not difficult to see that all the
complementary sets in $\mathscr{C}_{a}$ have the same lower and upper box
dimension (see \cite{Fal}, Chapter 4).

Here we study this problem for the Assouad-type dimensions. One example of a
countable set in $\mathscr{C}_{a}$ is the decreasing set, denoted $D_{a},$
obtained by placing the gaps in order, beginning at $L$. Although this set
has the smallest cardinality, we show that its Assouad dimension is maximal
among all Assouad dimensions of sets in $\mathscr{C}_{a}$. Further, we prove
that there are only two possible values for $\dim _{A}D_{a}$, namely $0,1$,
and we characterize when these choices occur in terms of properties of the
sequence. The minimum Assouad dimension is shown to be attained by the
associated Cantor set $C_{a}$. The proofs of these results are given in
Subsections \ref{section maximal assouad dimension} and \ref{section minimal
assouad dimension}.

Having found the extreme values, we also consider the structure of the set
of attainable values. We give an example of a sequence $a$ for which each
complementary set has either Assouad dimension $0$ or $1$, and both values
occur. In this example the sequence decreases very rapidly. Under the
assumption of a suitably controlled rate of decay, we prove that given any $%
s\in \lbrack \dim _{A}C_{a},\dim _{A}D_{a}]$ there is some $E\in \mathcal{C}%
_{a}$ such that $\dim _{A}E=s$. These arguments can be found in
Subsection \ref{structure}.

In Section \ref{proofs lower dimension}, we see that in the case of the
Lower Assouad dimension the opposite situation occurs: the Lower Assouad
dimension is minimized by $D_{a}$ and maximized by $C_{a}$. Under a suitable
technical condition on the relative sizes of the $a_{j}$, we again prove
that the set of attainable values of the dimension is a closed interval.

We begin, in Section \ref{prelim}, by introducing our terminology and
notation. There we also derive formulas for the (Lower) Assouad dimensions
of the associated sets $C_{a},$ generalizing the formulas found in \cite%
{li2014assouad} and \cite{ORS} for the special case of central Cantor sets.
These formulas will be very useful for the proofs given later in the paper.

\section{Preliminaries}

\label{prelim}

\subsection{Definitions and Terminology}$\empty$

\textbf{Assouad-type dimensions.} We begin by recalling the definitions of
the Assouad-type dimensions. Given a non-empty set $F\subset \mathbb{R}$ we
denote by $N_{r}(F)$ the minimum number of closed balls of radius $r$ needed to cover $F$.
By $B(x,R)$ we mean the closed ball of radius $R$ centred at $x$.

The \emph{Assouad dimension} of $F$ is defined as 
\begin{align*}
\dim _{A}F=\inf \ \Bigl\{\alpha :\ & \text{there are constants }c,\rho >0\ 
\text{such that } \\
& \sup_{x\in F}N_{r}(B(x,R)\cap F)\leq c\left( \frac{R}{r}\right) ^{\alpha
}\ \ \text{for all }0<r<R<\rho \Bigr\}.
\end{align*}%
Dually, the \emph{Lower Assouad dimension} of $F$ is defined as 
\begin{align*}
\dim _{L}F=\sup \ \Bigl\{\alpha :\ & \text{there are constants }c,\rho >0\ 
\text{such that } \\
& \inf_{x\in F}N_{r}(B(x,R)\cap F)\geq c\left( \frac{R}{r}\right) ^{\alpha
}\ \ \text{for all }0<r<R<\rho \Bigr\}.
\end{align*}

Lower Assouad dimension has simply been called Lower dimension in the
literature. We are calling it Lower Assouad dimension to emphasize the
relationship with the Assouad dimension.

For a precompact set $F$ we have 
\begin{equation*}
\dim _{L}F\leq \underline{\dim }_{B}F\leq \overline{\dim }_{B}F\leq \dim
_{A}F\text{,}
\end{equation*}%
while if $F$ is compact then 
\begin{equation*}
\dim _{L}F\leq \dim _{H}F
\end{equation*}%
(see \cite{larman1967new, larman1967hausdorff}). The Assouad-type dimensions
are quite sensitive to the local structure of the set. Indeed, if $F$ has an
isolated point then $\dim _{L}F=0$. If $F$ is a self-similar set, then the
Lower Assouad and Hausdorff dimensions coincide, and these coincide with the
Assouad dimension if $F$ satisfies the open set condition or if $F\subseteq 
\mathbb{R}$ satisfies the weak separation condition. For these facts and for
further background information on these dimensions we refer to the papers of
Fraser (et al) \cite{FHORM}, \cite{Fraser}, Luukkainen \cite%
{luukkainen1998assouad} and Olson \cite{olson2002bouligand}.

\medskip
	
\textbf{Complementary sets.} Let $a=\{a_{j}\}$ be a positive sequence with
finite sum $L$ and let $I=[0,L]$. Our interest is in the family $\mathscr{C}%
_{a}$ which consists of the closed subsets $E$ of $I,$ of the form $%
E=I\setminus \bigcup_{j=1}^{\infty }U_{j}$, where $\{U_{j}\}$ is a disjoint
family of open intervals contained in $I$, with the length of $U_{j}$ equal
to $a_{j}$ for each \thinspace $j$. The members of $\mathscr{C}_{a}$ are
called the \emph{complementary sets of }$a$. One complementary set is the
countable decreasing (from right to left) set $D_{a}=\{\sum_{i=k}^{\infty
}a_{i}\}_{k}.$

We remark that if $b$ is  any rearrangement of $a$, then the complementary sets are the same; in particular, this is true if $b$ is the decreasing rearrangement.

\medskip

\textbf{Associated Cantor sets.} Another set in each class $\mathscr{C}_{a}$
is what we call the \emph{associated Cantor set} and denote by $C_{a}.$ It
is constructed as follows: In the first step, we remove from $I$ an open
interval of length $a_{1}$, resulting in two closed intervals $I_{1}^{1}$
and $I_{2}^{1}$. Having constructed the $k$-th step, we obtain the closed
intervals $I_{1}^{k},\ldots ,I_{2^{k}}^{k}$ contained in $I$. The next step
consists in removing from each $I_{j}^{k}$ an open interval of length $%
a_{2^{k}+j-1}$, obtaining the closed intervals $I_{2j-1}^{k+1}$ and $%
I_{2j}^{k+1}$. We define $C_{a}:=\bigcap_{k\geq
1}\bigcup_{j=1}^{2^{k}}I_{j}^{k}$. This construction uniquely determines an
uncountable set that is homeomorphic to the classical middle-third Cantor
set. Indeed, the classical Cantor set is the Cantor set associated with the
sequence $\{a_{i}\}$ where $a_{i}=3^{-n}$ if $2^{n-1}\leq i\leq 2^{n}-1$. We
call $a=\{a_{j}\}$ \emph{the gap sequence} of $C_{a}$.

In the case when the sequence $a$ is decreasing (i.e., $a_{j}\geq a_{j+1}$
for all $j$), we will call $C_{a}$ a \emph{decreasing Cantor set}. The
classical Cantor set is such an example.

\medskip

\textbf{Central Cantor sets.} Given a sequence of ratios $\{r_{j}\}$ with $%
0<r_{j}<1/2$, we construct a \emph{central Cantor set }$C{\{r_{j}\}}$\emph{\ 
}in the same fashion as the classical Cantor set, but with the $2^{k}$
intervals of step $k$ having lengths $r_{1}\cdot \cdot \cdot r_{k}$. This
set is the Cantor set, $C_{a},$ associated to the gap sequence $a=\{a_{i}\}$
where $a_{1}=1-2r_{1}$ and $a_{i}=r_{1}\cdot \cdot \cdot r_{n}(1-2r_{n+1})$
if $2^{n}\leq i\leq 2^{n+1}-1$. Note that this gap sequence is not
necessarily decreasing, but rather lists the gaps ordered by level and
repeated according to their multiplicity.

For such a choice of gap sequence $a$, we always have $\dim _{A}D_{a}=1$.
This is because for each $k$ there are $2^{k}$ gaps all of the same length,
say, $g_{k}$. If we choose $R=2^{k}g_{k}$ and $r=g_{k},$ then it is easy to
see that for a suitable point $x$, $N_{r}(B(x,R)\cap D_{a})=2^{k}=R/r$.

\subsection{Corresponding complementary sets}

Given two summable sequences, $a=\{a_{n}\}$ and $b=\{b_{n}\}$, one can
assign a natural bijection between elements in $\mathscr C_{a}$ and $%
\mathscr
C_{b}$ as follows (see for example \cite{CMMS}, Sections 2 and 3 for the
case of Cantor sets). Let $E\in \mathscr C_{a}$ and denote by $g_{n}^{(a)}$
the complementary gap of $E$ of length $a_{n}$. For $x\in E$, let $%
L_{a}(x)=L_{a}(x,E)=\bigl\{n\in \mathbb{N}:g_{n}^{(a)}\subset \lbrack 0,x]\bigr\}$, so
that 
\begin{equation*}
x=\sum_{n\in L_{a}(x)}a_{n}.
\end{equation*}%
We define 
\begin{equation*}
\pi (x)=\sum_{n\in L_{a}(x)}b_{n}.
\end{equation*}%
Notice that $\pi :E\rightarrow \pi (E)\subset \lbrack 0,\sum b_{i}]$ is an
increasing homeomorphism and that $\pi (E)\in \mathscr C_{b}$; the last part
of the observation follows by letting $g_{n}^{(a)}=(x,y)$, so that $(\pi
(x),\pi (y))$ is the complementary gap in $F$ of length $b_{n}$. Moreover,
for any $F\in \mathscr C_{b}$ we can do the inverse process to obtain $%
L_{b}(z,F)=L_{a}(x,E)$, where $x=\psi (z)$, $E=\psi (F)$ and $\psi $ the
corresponding homeomorphism. By the above considerations, the map $\Pi :%
\mathscr C_{a}\rightarrow \mathscr C_{b}$ given by $\Pi (E)=\pi (E)$ is a
bijection. We say that $E\in \mathscr C_{a}$ and $F\in \mathscr C_{b}$ are 
\emph{corresponding complementary sets} if $\Pi (E)=F$.

Two sequences $\{a_{n}\}$ and $\{b_{n}\}$ are said to be \emph{equivalent}
if $a_n\sim b_n$, that is, if there is $c\geq 1$ such that 
\begin{equation*}
c^{-1}\leq \frac{a_{n}}{b_{n}}\leq c\ \ \ \text{for all}\ n.
\end{equation*}%
Recall that Assouad-type dimensions are bi-Lipschitz invariant; see \cite%
{Fraser}.

\begin{proposition}
\label{equivalentarrangement} If $a$ and $b$ are equivalent sequences, then
the corresponding complementary sets in $\mathscr C_{a}$ and $\mathscr C_{b}$
are bi-Lipschitz equivalent and hence have the same (Lower) Assouad
dimension.
\end{proposition}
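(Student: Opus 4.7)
The plan is to show directly that the bijection $\pi : E \to F$ described just before the statement is bi-Lipschitz whenever $a \sim b$, and then invoke the bi-Lipschitz invariance of the Assouad-type dimensions.

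First I would exploit the fact, noted in the introduction, that every set in $\mathscr{C}_a$ has Lebesgue measure zero. Fix $x, y \in E$ with $x < y$. Since $E \cap [x,y]$ has Lebesgue measure zero, the length $y-x$ equals the total length of the complementary gaps of $E$ that are contained in $[x,y]$. Because $x, y \in E$ (so neither lies in an open gap), these are exactly the gaps $g_n^{(a)} \subset (x,y)$, and hence
\begin{equation*}
y - x = \sum_{n \,:\, g_n^{(a)} \subset (x,y)} a_n.
\end{equation*}
On the other hand, from the definitions
\begin{equation*}
\pi(y) - \pi(x) = \sum_{n \in L_a(y) \setminus L_a(x)} b_n = \sum_{n \,:\, g_n^{(a)} \subset (x,y)} b_n,
\end{equation*}
since the $n$ belonging to $L_a(y) \setminus L_a(x)$ are precisely those indices whose gap lies to the right of $x$ but not past $y$, and the fact that $x,y\in E$ forces strict containment in $(x,y)$ for $F\in \mathscr{C}_b$ as well (so $\pi(y),\pi(x)\in F$).

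Now I would apply the equivalence $c^{-1} a_n \leq b_n \leq c\, a_n$ term by term inside this sum to obtain
\begin{equation*}
c^{-1}(y - x) \;\leq\; \pi(y) - \pi(x) \;\leq\; c\,(y - x),
\end{equation*}
which is exactly the bi-Lipschitz estimate for $\pi$ on $E$. The same argument applied to $\pi^{-1}$ (or just by symmetry of the inequality) shows that the inverse is bi-Lipschitz too. Therefore $E$ and $F = \Pi(E)$ are bi-Lipschitz equivalent as subsets of $\mathbb{R}$, and by the bi-Lipschitz invariance of the Assouad and Lower Assouad dimensions (cited from \cite{Fraser}) we conclude $\dim_A E = \dim_A F$ and $\dim_L E = \dim_L F$.

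No step here looks genuinely hard; the only point that requires a moment of care is the identification of $L_a(y) \setminus L_a(x)$ with the set of indices $n$ for which $g_n^{(a)} \subset (x,y)$, which uses in an essential way that $x, y \in E$ (otherwise one would have to worry about a gap with $x$ or $y$ as an endpoint) and that $E$ has Lebesgue measure zero (so that the sum of the relevant $a_n$ really equals $y-x$ and not something smaller).
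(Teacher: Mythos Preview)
Your proof is correct and follows essentially the same approach as the paper's: both compute $\pi(y)-\pi(x)=\sum_{n\in L_a(y)\setminus L_a(x)}b_n$ and compare it to $y-x=\sum_{n\in L_a(y)\setminus L_a(x)}a_n$ via the equivalence hypothesis. The paper's proof is more terse, taking the identity $x=\sum_{n\in L_a(x)}a_n$ as already established in the setup, whereas you spell out the justification via the Lebesgue measure zero property and the identification of $L_a(y)\setminus L_a(x)$ with the gaps contained in $(x,y)$.
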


\begin{proof}
If $x<y$, where $x,y\in E$ and $E\in \mathscr C_a$, then by definition of $%
\pi$ and the equivalence hypothesis, 
\begin{equation*}
\pi(y)-\pi(x)=\sum_{n\in L_a(y)\setminus L_a(x)} b_n \ \sim \sum_{n\in
L_a(y)\setminus L_a(x)} a_n=y-x.
\end{equation*}
This proves the statement.
\end{proof}

We say a decreasing sequence $\{a_{n}\}$ is \emph{doubling }if there is a
constant $\tau $ such that $a_{n}\leq \tau a_{2n}$ for all \thinspace $n$.
If $a$ is a decreasing, doubling sequence with $\sum_{j}a_{j}=1$, then it is
easy to see that $a$ is equivalent to the sequence of gaps of a central
Cantor set with ratios $r_{k}$ given by $1-2r_1 =a_1$ and 
\begin{equation}
r_{1}\cdot \cdot \cdot r_{n}(1-2r_{n+1})=2^{-n}(a_{2^{n}}+\cdot \cdot \cdot
+a_{2^{n+1}-1})  \label{centralCantor}
\end{equation}%
for $n \geq 1$. Consequently, the Assouad-type dimensions of $C_{a}$
coincide with those of this central Cantor set.

\subsection{Assouad type dimensions of $\mathbf{C_\emph{a}}$}

\label{Assouad type dimensions of $C_a$}

Given a decreasing, summable sequence $a=\{a_{j}\},$ we put 
\begin{equation*}
s_{n}^{(a)}=2^{-n}\sum_{j=2^{n}}^{\infty }a_{j},\text{ for }n\geq 0.
\end{equation*}%
When the sequence $a$ is clear we simply write $s_{n}$. This is the average
length of the intervals that arise at step $n$ in the construction of the
associated decreasing Cantor set. In the special case that $a$ is the
sequence of gaps of a central Cantor set $C\{r_{j}\}$, then $s_{n}$ is the
length of the step $n$ Cantor intervals, $r_{1}\cdot \cdot \cdot r_{n}$.
More generally, if $I_{j}^{n}$ is any interval arising in step $n$ of the
decreasing Cantor set $C_{a}$, then $s_{n+1}\leq $ $\left\vert
I_{j}^{n}\right\vert \leq s_{n-1}$.

Conversely, given $a=\{a_{j}\}$ with $\sum a_{j}=1$, if we define $r_{n}$ by
(\ref{centralCantor}), then 
\begin{equation*}
r_{1}\cdot \cdot \cdot r_{n}(1-2r_{n+1})=s^{(a)}_{n}-2s^{(a)}_{n+1}
\end{equation*}%
and an easy induction argument shows that $s^{(a)}_{n}=r_{1}\cdot \cdot
\cdot r_{n} $.

We can calculate the Assouad-type dimensions for the sets $C_{a}$ in terms
of the sequence $\{s^{(a)}_{n}\}$.

\begin{theorem}
Let $a$ be a decreasing, summable sequence and $C_{a}$ the associated Cantor
set. Then%
\begin{align}
\dim _{A}C_{a}=\inf :\Bigl\{\beta :\exists \ k_{\beta },\ n_{\beta }\ \text{%
such that}\ \forall & k\geq k_{\beta },n\geq n_{\beta }  \label{AssouadDec}
\\
\frac{n\log 2}{\log s_{k}/s_{k+n}}& \leq \beta \Bigr\}.  \notag
\end{align}%
and%
\begin{align}
\dim _{L}C_{a}=\sup :\Bigl\{\beta :\exists \ k_{\beta },\ n_{\beta }\ \text{%
such that}\ \forall & k\geq k_{\beta },n\geq n_{\beta }  \label{LowerDim} \\
\frac{n\log 2}{\log s_{k}/s_{k+n}}& \geq \beta \Bigr\}.  \notag
\end{align}
\end{theorem}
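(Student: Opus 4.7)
The plan is to translate the Assouad-type dimension definitions (which involve arbitrary scales $r<R$) into statements about the canonical Cantor-construction scales $s_k$. The key geometric input is the observation (recorded just before the theorem) that every step-$k$ interval $I_j^k$ of the decreasing Cantor set $C_a$ satisfies $s_{k+1}\leq |I_j^k|\leq s_{k-1}$. A short calculation additionally shows that the smallest gap between two adjacent step-$k$ intervals, namely $a_{2^k-1}$, is at least $s_k/2$ (because $a_{2^k}\geq s_k-s_{k+1}\geq s_k/2$ and $a$ is decreasing). Together these two facts reduce the computation of $N_r(B(x,R)\cap C_a)$ to counting how many step-$(k+n)$ subintervals fit inside the relevant step-$k$ intervals, under the comparisons $R\asymp s_k$ and $r\asymp s_{k+n}$.

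For the upper bound on $\dim_A$ in (\ref{AssouadDec}), suppose $\beta$ belongs to the set defining the infimum, so $2^n\leq(s_k/s_{k+n})^\beta$ holds for all $k\geq k_\beta$, $n\geq n_\beta$. Given $0<r<R<\rho$, I would choose $k,n$ so that $s_k/2\leq R\leq s_k$ and $s_{k+n-1}/2\leq r\leq s_{k+n-1}$. Since $R\leq s_k\leq 2a_{2^k-1}$, the ball $B(x,R)$ meets only a uniformly bounded number of step-$k$ intervals; each such interval contains $2^{n-1}$ step-$(k+n-1)$ subintervals of length at most $s_{k+n-1}\leq 2r$, each coverable by a single $r$-ball. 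Hence $N_r(B(x,R)\cap C_a)\leq C\,2^{n-1}$, and applying the hypothesis with $n-1$ in place of $n$ yields $N_r(B(x,R)\cap C_a)\leq C'(R/r)^\beta$, so $\dim_A C_a\leq\beta$.

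For the reverse inequality, suppose $\beta$ is not in the infimum set, so there exist arbitrarily large $k,n$ with $2^n>(s_k/s_{k+n})^\beta$. For each such pair, I would pick $x\in I_j^k\cap C_a$ and take $R=|I_j^k|$, so that $B(x,R)\supseteq I_j^k$. Setting $r$ just below $a_{2^{k+n}-1}/2$, which is a lower bound for the gap between any two step-$(k+n)$ subintervals of $I_j^k$, each of the $2^n$ such subintervals requires its own $r$-ball, giving $N_r(B(x,R)\cap C_a)\geq 2^n$. Combined with $R/r\leq C\, s_{k-1}/s_{k+n}$, this forces $\dim_A C_a\geq\beta$. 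The formula (\ref{LowerDim}) for $\dim_L$ is proved by the dual argument: the upper bound on $\dim_L$ uses a ``bad'' $x$ near the edge of a step-$k$ interval so that $B(x,R)$ catches few subintervals, while the lower bound uses that every $x\in C_a$ sits inside a step-$k$ interval contained in $B(x,s_{k-1})$, automatically giving $\geq 2^n$ well-separated step-$(k+n)$ subintervals.

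The main technical difficulty is controlling the mismatch between the natural scales that appear in the argument ($s_k$, $s_{k+n-1}$, $a_{2^{k+n}-1}$) and the scales $s_k$, $s_{k+n}$ that appear in the formula. Each mismatch is a bounded constant factor or a one-unit shift in $k$ or $n$; since the formula is defined by a double limit as $k,n\to\infty$, all such shifts are absorbed without changing the value of the asymptotic infimum or supremum.
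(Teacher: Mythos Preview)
Your overall strategy matches the paper's: discretize $r,R$ to the scales $s_k$ and count construction intervals. But your ``short calculation'' is wrong. The inequality $a_{2^k}\geq s_k-s_{k+1}$ fails in general; the identity $s_k-2s_{k+1}=2^{-k}\sum_{j=2^k}^{2^{k+1}-1}a_j$ gives only $a_{2^k}\geq s_k-2s_{k+1}$, which can be $o(s_k)$. For instance, if $a_j=1/(j\log^2 j)$ then $s_k\sim 2^{-k}/k$ while $a_{2^k}\sim 2^{-k}/k^2$, so $a_{2^k-1}/s_k\to 0$ and your claim $a_{2^k-1}\geq s_k/2$ is false. Both places where you invoke this estimate then break: you cannot deduce from a gap-size bound that $B(x,s_k)$ meets boundedly many step-$k$ intervals, and with $r$ chosen near $a_{2^{k+n}-1}$ you do not get $R/r\leq C\,s_{k-1}/s_{k+n}$.

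The paper avoids gap sizes altogether. For the covering direction it uses a containment argument: if $B(x,R)$ with $2R<s_{k-1}$ intersected five step-$(k-1)$ intervals it would fully contain a step-$(k-2)$ interval of length $\geq s_{k-1}>2R$, a contradiction; hence $B(x,R)$ meets at most four. For the separation direction it takes $r=s_{k+n}$ and observes that each step-$(k+n-1)$ interval has length $\geq r$, so distinct ones cannot be covered by a single $r$-ball; this yields $N_r\geq 2^{n-2}$ together with $R/r\leq s_k/s_{k+n}$ directly, and the constants are then absorbed via an explicit $\epsilon$ rather than by index shifts. Finally, your sketch for $\dim_L$ omits a genuine case distinction: when $\inf_k s_{k+1}/s_k=0$, the ratio $R/r$ can be arbitrarily large even for bounded $n$, so the residual ``small $n$'' terms cannot be absorbed as you suggest; the paper treats this case separately by showing that both sides of (\ref{LowerDim}) are then zero.
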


\begin{proof}
Assouad Dimension of $C_a$: Let $d=\dim _{A}C_a$ and $\alpha $ equal the
right hand side in (\ref{AssouadDec}). Given $\epsilon >0$ there are
constants $c$ and $\rho $ (depending on $\epsilon )$ such that if $%
0<r<R<\rho $, then 
\begin{equation*}
N_{r}(B(x,R)\cap C_a)\leq c\left( \frac{R}{r}\right) ^{d+\epsilon }.
\end{equation*}%
Pick $k\geq k_{0}$ so large that $s_{k}<\rho ,$ and let $I^{k+1}_j$ be any
interval of step $k+1$ in the construction. Put $R=\left\vert
I^{k+1}_j\right\vert $ and $r=s_{k+n}$ so 
\begin{equation*}
\frac{R}{r}=\frac{\left\vert I^k_j\right\vert }{s_{k+n}}\leq \frac{s_{k}}{%
s_{k+n}}.
\end{equation*}%
Let $x$ denote the left endpoint of interval $I^{k+1}_j$. As the intervals
in the construction at level $k+n-1$  have length at least $r$ and $%
B(x,R)\supseteq I^{k+1}_j$, it follows that $N_{r}(B(x,R))\geq 2^{n-2}$.
Thus 
\begin{equation*}
2^{n-2}\leq c\left( \frac{s_{k}}{s_{k+n}}\right) ^{d+\epsilon }\ \ \text{for
all }k\geq k_{0}\ \text{and }n\geq 2.
\end{equation*}%
But $\log s_{k}/s_{k+n}\geq n\log 2$, so 
\begin{align*}
d+\epsilon & \geq \frac{n\log 2}{\log s_{k}/s_{k+n}}-\frac{\log 4c}{\log
s_{k}/s_{k+n}} \\
& \geq \frac{n\log 2}{\log s_{k}/s_{k+n}}-\frac{\log 4c}{n\log 2}.
\end{align*}%
If we choose $n_{0}>1$ such that $\log 4c/(n\log 2)<\epsilon $ for all $%
n\geq n_{0}$ then, 
\begin{equation*}
d+2\epsilon \geq \frac{n\log 2}{\log s_{k}/s_{k+n}}\text{ for all }k\geq
k_{0},n\geq n_{0}.
\end{equation*}%
It follows that $\alpha \leq d$.

To see that $d\leq \alpha $ we will show that for any $\epsilon >0$ there
are $c$, $\rho $ such that if $0<r<R<\rho $, then 
\begin{equation}
N_{r}(B(x,R)\cap C_a)\leq c\left( \frac{R}{r}\right) ^{\alpha +\epsilon }.
\label{eq4}
\end{equation}%
Choose $k_{\epsilon }$, $n_{\epsilon }$ as in the definition of $\alpha $,
so 
\begin{equation*}
\frac{n\log 2}{\log s_{k}/s_{k+n}}\leq \alpha +\epsilon \text{ for all }%
k\geq k_{\epsilon },\ n\geq n_{\epsilon }.
\end{equation*}%
Put $\rho =s_{k_{\varepsilon }}/2$. Consider any $0<r<R<\rho $ and $B(x,R)$
with $x\in C_a$. Choose $k,n$ such that 
\begin{equation*}
s_{k}\leq 2R<s_{k-1}\ \ \ \text{(so}\ k\geq k_{\epsilon }),
\end{equation*}%
and 
\begin{equation*}
s_{n+1}\leq r<s_{n}\ \ \text{(so }n\geq k-1)).
\end{equation*}%
If $B(x,R)$ intersects (non-trivially) at least five intervals of step $k-1$
in $C_a$, then it will fully contain at least one of level $k-2$. But this
is impossible since $\text{diam}\: B(x,R)<s_{k-1}$ and this is less than the
length of any interval of step $k-2$. Thus $B(x,R)$ intersects at most four
step $k-1$ intervals. The (closed) balls of radius $r$ centred at the left
endpoints of the step $n+2$ intervals contained in these step $k-1$
intervals cover $B(x,R)\cap C_a$ and hence $N_{r}(B(x,R))\leq 4
\cdot2^{n-k+3}$. Furthermore, 
\begin{equation*}
\frac{R}{r}\geq \frac{s_{k}/2}{s_{n}}.
\end{equation*}%
If $n=k+m$ for $m\geq 1$ and $m\geq n_{\epsilon },$ then $2^{m}\leq
(s_{k}/s_{k+m})^{\alpha +\epsilon }$, so 
\begin{equation*}
N_{r}(B(x,R)\cap C_a)\leq 32\cdot 2^{m}\leq 32\left( \frac{s_{k}}{s_{k+m}}%
\right) ^{\alpha +\epsilon }\leq c\left( \frac{R}{r}\right) ^{\alpha
+\epsilon }
\end{equation*}%
for a suitable constant $c$.

Since $N_{r}(B(x,R)\cap C_a)\leq 32 \cdot 2^{n_{\varepsilon }}$ if $m\leq
n_{\varepsilon }$ and is at most $32$ if $n=k-1$ or $k$, and $R/r\geq 1,$ it
follows that by making a larger choice of constant, if necessary, 
inequality (\ref{eq4}) will hold for $1\leq m\leq n_{\epsilon }$ and for $n=k-1,k$.
Since (\ref{eq4}) is satisfied, $d\leq \alpha $ and hence we have $\dim
_{A}C_a=\alpha $ .

\medskip Lower Assouad Dimension of $C_a$: In the case when $\inf
s_{k+1}/s_{k}>0$, the argument that the Lower Assouad dimension of $C_a$ is
equal to the right side of (\ref{LowerDim}) is similar to the argument for
the Assouad dimension. The extra assumption is used to find an upper bound
on $R/r$ when $1\leq m\leq n_{\varepsilon } $ or $n=k-1,k$ in the final step
of the proof.

So suppose $\inf s_{k+1}/s_{k}=0$. We will show that both $\dim _{L}C_{a}=0$
and the RHS of (\ref{LowerDim}) $=0$ and hence the two are equal.

It is easy to see that $\dim _{L}C_a=0$ if $\inf s_{k+1}/s_{k}=0$: just take 
$R=s_{k}/2$ and $r=s_{k+1}/2$. Then any ball of radius $R$ intersects at
most four intervals of step $k$ in $C_a$, for otherwise, it would fully
contain a step $k-1$ interval and those have length at least $s_{k}\geq 
\text{diam}\: B(x,R)$. Consequently, $B(x,R)$ can be covered by at most 16
balls of radius $r$, namely balls centred at the endpoints of the step $k+2$
intervals contained in four step $k$ intervals. As we can make such choices $%
R,r$ with $R/r\rightarrow \infty $ it follows that we cannot satisfy $%
N_{r}(B(x,R) \cap C_a)\geq c(R/r)^{\varepsilon }$ for any $\varepsilon >0$.

Now assume the RHS of (\ref{LowerDim}) is strictly positive. Then there will
be some $\gamma >0$ such that 
\begin{equation*}
\left( \frac{s_{k+n}}{s_{k}}\right) ^{1/n}\geq 2^{-\gamma }
\end{equation*}%
for all $k,n$ sufficiently large, say $k\geq K$ and $n\geq N$. In
particular, for all $k\geq K$ 
\begin{equation*}
2^{-\gamma /N}\leq \frac{s_{k+N}}{s_{k}} \leq \frac{s_{k+1}}{s_{k}}\text{.}
\end{equation*}%
As $N$ is fixed, it follows that $\inf s_{k+1}/s_{k}>0$, a contradiction.
\end{proof}

\begin{corollary}
\label{CoroDimLzero} (i) $\dim _{L}C_{a}=0$ if and only if $\inf
s_{k+1}/s_{k}=0$. In particular, if $a$ is a doubling sequence, then $\dim
_{L}C_{a}>0$.

(ii) $\dim _{L}C\{r_{j}\}=0$ if and only if $\inf r_{j}=0$.
\end{corollary}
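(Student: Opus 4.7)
The plan is to derive both parts directly from the theorem and the explicit formula $s_n^{(a)} = r_1 \cdots r_n$ that was noted in the paragraph preceding the theorem.

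For the forward direction of (i), I would simply quote the argument already given at the end of the previous proof: when $\inf s_{k+1}/s_k = 0$, the choice $R = s_k/2$, $r = s_{k+1}/2$ with $R/r \to \infty$ covers $B(x,R) \cap C_a$ by a bounded number of $r$-balls, forcing $\dim_L C_a = 0$. For the converse, suppose $c := \inf s_{k+1}/s_k > 0$, so $s_{k+n}/s_k \geq c^n$ and hence $\log(s_k/s_{k+n}) \leq n \log(1/c)$. Then
\begin{equation*}
\frac{n \log 2}{\log s_k/s_{k+n}} \geq \frac{\log 2}{\log(1/c)} > 0
\end{equation*}
for all large $k,n$, so by formula (\ref{LowerDim}) we get $\dim_L C_a \geq \log 2/\log(1/c) > 0$.

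For the ``in particular'' clause, I would show that doubling implies $\inf s_{k+1}/s_k > 0$. The key identity is
\begin{equation*}
s_k = 2 s_{k+1} + 2^{-k} \sum_{j=2^k}^{2^{k+1}-1} a_j,
\end{equation*}
obtained by splitting $\sum_{j \geq 2^k} a_j$ into the block $2^k \leq j < 2^{k+1}$ and the tail. Using $a_j \leq \tau a_{2j}$ for each $j = 2^k + i$ with $0 \leq i < 2^k$, I bound
\begin{equation*}
\sum_{j=2^k}^{2^{k+1}-1} a_j \;\leq\; \tau \sum_{i=0}^{2^k - 1} a_{2^{k+1} + 2i} \;\leq\; \tau \sum_{j=2^{k+1}}^{2^{k+2}-1} a_j \;\leq\; \tau \sum_{j \geq 2^{k+1}} a_j = \tau \cdot 2^{k+1} s_{k+1}.
\end{equation*}
Substituting gives $s_k \leq (2\tau + 2) s_{k+1}$, so $\inf s_{k+1}/s_k \geq 1/(2\tau+2) > 0$, and part (i) yields $\dim_L C_a > 0$.

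Part (ii) is immediate once we recall from the paragraph after (\ref{centralCantor}) that, for a central Cantor set, $s_n = r_1 \cdots r_n$. Therefore $s_{k+1}/s_k = r_{k+1}$, and so $\inf_k s_{k+1}/s_k = \inf_j r_j$. Applying (i) finishes the argument. The only mildly tricky step is the doubling estimate; the rest is simply unpacking what the theorem already says.
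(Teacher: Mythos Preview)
Your proof is correct and follows essentially the same approach as the paper. The paper is terser---it simply says ``the first statement follows from the proof of the theorem'' and sketches the doubling step via $2^{-n}\sum_{j=2^n+1}^{2^{n+1}} a_j \sim a_{2^n}$---but your explicit computation that $s_k \leq (2\tau+2)s_{k+1}$ and your direct verification of the converse via $(\ref{LowerDim})$ are just a more detailed version of the same argument; part (ii) is handled identically.
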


\begin{proof}
(i) The first statement follows from the proof of the theorem. For the
second statement we note that the doubling assumption implies $\inf
s_{k+1}/s_{k}>0$. This is because under this assumption 
\begin{equation*}
2^{-n}\sum_{j=2^{n}+1}^{2^{n+1}}a_{j}\sim a_{2^{n}}\text{ }
\end{equation*}%
and thus there are positive constants $c_1, c_2$ so that $s_{n-1}\leq
c_1(a_{2^{n}}+s_{n}) $, while $s_{n}\geq c_2a_{2^{n}}$.

(ii) is immediate since $r_{j}=s_{j}/s_{j-1}$.
\end{proof}

\begin{remark}
\label{DimComment}It is an easy exercise to check that we have%
\begin{align}
\dim _{A}C_{a}& =\inf :\Bigl\{\beta :\exists \ k_{\beta },\ n_{\beta }\ 
\text{such that}\ 2^{n}\leq \left( \frac{s_{k}}{s_{k+n}}\right) ^{\beta }%
\text{ }\forall k\geq k_{\beta },n\geq n_{\beta }\Bigr\}
\label{formula upper cantor} \\
& =\limsup_{n}\left( \sup_{k}\frac{n\log 2}{\log (s_{k}/s_{k+n})}\right)
\label{AdimAlt}
\end{align}%
and 
\begin{align}
\dim _{L}C_{a}& =\sup :\Bigl\{\beta :\exists \ k_{\beta },\ n_{\beta }\ 
\text{such that }2^{n}\geq \left( \frac{s_{k}}{s_{k+n}}\right) ^{\beta }\ 
\text{ }\forall k\geq k_{\beta },n\geq n_{\beta }\Bigr\}
\label{formula lower} \\
& =\liminf_{n}\left( \inf_{k}\frac{n\log 2}{\log (s_{k}/s_{k+n})}\right).
\label{LowerDimAlt}
\end{align}
\end{remark}


\begin{remark}
Similar arguments show that the same formula holds if $C\{r_{j}\}$ is a
central Cantor set. Previously, the Assouad dimension formula (\ref{AdimAlt}%
) was obtained for central Cantor sets, using other methods, by Olson et al.
in \cite{ORS}; see also Li et al. \cite{li2014assouad}.
\end{remark}

\section{The Assouad dimensions of complementary sets}

\label{proofs}

In this section we will study the Assouad dimensions of the sets that belong
to a family $\mathscr{C}_{a}$. Note that throughout the paper the constant $c>0$ appearing in inequalities may change from one occurrence to another.

\subsection{Maximal Assouad dimension}

\label{section maximal assouad dimension} We first show that the decreasing
rearrangement, $D_{a}$, gives the largest Assouad dimension. We prove that
its value is either $0$ or $1$ and we characterize when it is $0$ in terms
of a lacunary type condition.

\begin{proposition}
\label{01fordecreasing} Let $a=\{a_{j}\}$ be a decreasing, summable sequence
and let $D_{a}\in \mathscr{C}_{a}$ be the decreasing rearrangement. Then $%
\dim _{A}D_{a}=0$ or $1$. Moreover, $\dim _{A}D_{a}=0$ if and only if there
is some $\varepsilon >0$ such that $a_{j}\geq \varepsilon
\sum_{i=j+1}^{\infty }a_{i}$ for all $j$.
\end{proposition}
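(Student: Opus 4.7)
The plan is to reformulate everything in terms of the tail sums $x_k := \sum_{i \geq k} a_i$, so that $D_a = \{0\} \cup \{x_k\}_{k \geq 1}$ and the hypothesis $a_j \geq \varepsilon \sum_{i>j} a_i$ becomes $a_j = x_j - x_{j+1} \geq \varepsilon x_{j+1}$, i.e., $x_{j+1} \leq x_j/(1+\varepsilon)$. So the hypothesis is equivalent to the tail sequence $(x_k)$ decaying at a uniform geometric rate $\rho := 1/(1+\varepsilon) < 1$, and its failure is equivalent to $\inf_j a_j/x_{j+1} = 0$. A pigeonhole observation (for any bound $M$, the finite set $\{a_j/x_{j+1} : j \leq M\}$ has positive minimum) shows that under failure this infimum is attained in the limit along a sequence $j_k \to \infty$, whence $x_{j_k + 1} \to 0$.

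For the forward direction, I would bound $N_r(B(x,R) \cap D_a)$ directly. Geometric decay gives $x_k \leq L \rho^{k-1}$, so starting from the largest index $k_0$ with $x_{k_0} \leq R$, each subsequent $x_{k_0 + m}$ is at most $R/(1+\varepsilon)^m$. Hence only $O(\log(R/r))$ indices have $x_k \in [r, R]$, and the remaining points fit in one ball of radius $r$ near $0$. For a general centre $x_k \in D_a$, the neighbouring gaps $a_{k-1}, a_k$ are both at least $\varepsilon x_k/(1+\varepsilon)$, so either $B(x_k, R)$ contains only $x_k$, or $R \gtrsim x_k$ and $B(x_k, R) \subseteq B(0, C R)$ for a constant depending only on $\varepsilon$, reducing to the previous case. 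Since $\log(R/r) = o((R/r)^\alpha)$ for every $\alpha > 0$, this forces $\dim_A D_a = 0$.

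For the converse, fix $\beta < 1$ and $\varepsilon > 0$ small (say $\varepsilon < 1/4$), and pick large $j$ with $a_j < \varepsilon x_{j+1}$. The decisive observation is that monotonicity of $(a_i)$ forces every gap past index $j$ to be tiny: $a_{j+i} \leq a_j < \varepsilon x_{j+1}$ for all $i \geq 0$. Let $K$ be the number of $i \geq j+1$ with $x_i \geq x_{j+1}/2$; since these first $K$ gaps together must exceed $x_{j+1}/2$ while each is $< \varepsilon x_{j+1}$, one gets $K > 1/(2\varepsilon)$. I then take the ball $B(x_{j+1}, R)$ with $R = x_{j+1}/2$ and cover radius $r = \varepsilon x_{j+1}$, so that $R/r = 1/(2\varepsilon)$. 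The ball contains the $K$ points $x_{j+1}, x_{j+2}, \dots, x_{j+K}$, spanning a length $> x_{j+1}(1/2 - \varepsilon) > x_{j+1}/4$. Greedily extracting a $2r$-separated subset produces blocks of consecutive gaps each of total length in $(2r, 3r]$ (since every gap is $\leq r$, the overshoot is bounded), so there are at least $(x_{j+1}/4)/(3r) = 1/(12\varepsilon)$ such blocks. Using the standard inequality $N_r \geq P_{2r}$ for covering versus packing numbers,
\[
\frac{N_r(B(x_{j+1}, R) \cap D_a)}{(R/r)^\beta} \geq \frac{1/(12\varepsilon)}{(1/(2\varepsilon))^\beta} = \frac{2^\beta}{12}\,\varepsilon^{\beta - 1} \longrightarrow \infty
\]
as $\varepsilon \to 0$ along $j_k \to \infty$ (which also forces $R = x_{j_k+1}/2 \to 0$). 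Hence $\dim_A D_a \geq \beta$ for every $\beta < 1$, and combined with the trivial $\dim_A D_a \leq 1$ this gives $\dim_A D_a = 1$.

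The hard step I expect is the greedy packing in the converse direction: extracting $\gtrsim 1/\varepsilon$ separated points from the cluster requires a uniform upper bound on individual gaps so that the greedy algorithm does not overshoot catastrophically. This is exactly what the decreasing assumption on $(a_i)$ combined with $a_j < \varepsilon x_{j+1}$ provides, and without such control (for instance, if a single atypically large gap sat inside the cluster) the argument would only produce $O(1)$ separated points rather than the $\sim 1/\varepsilon$ needed to overwhelm $(R/r)^\beta$.
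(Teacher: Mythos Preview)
Your proposal is correct and follows essentially the same route as the paper: both reformulate in terms of the tails $x_j=\sum_{i\ge j}a_i$, use the geometric decay $x_{j+1}\le x_j/(1+\varepsilon)$ to get the logarithmic covering bound in the forward direction, and exploit $a_i\le a_j<\varepsilon x_{j+1}$ for $i\ge j$ to produce many $r$-separated points in the converse. Your reduction of the general centre to the centre $0$ in the forward direction is a clean shortcut compared to the paper's case analysis, while in the converse the paper replaces your greedy packing by the equivalent (and slightly quicker) observation that the $N$ consecutive intervals of length $r=a_k$ in $[0,x_{k+1}]$ must each contain some $x_i$.
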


\begin{proof}
Let $x_{j}=\sum_{i=j}^{\infty }a_{i}$, so that $D_{a}=\{x_{j}\}$. Note that $%
a_{j}=x_{j}-x_{j+1}$ and that $a_{j}\geq \varepsilon \sum_{i=j+1}^{\infty
}a_{i}$ if and only if $a_{j}\geq \varepsilon x_{j+1}$ if and only if $%
x_{j}\geq (1+\varepsilon )x_{j+1}.$

First, we will assume there is no $\varepsilon >0$ such that $a_{j}\geq
\varepsilon \sum_{i=j+1}^{\infty }a_{i}$ for all $j$ and prove $\dim
_{A}D_{a}=1$. Under this assumption, for every positive integer $N_{0}$
there are arbitrarily large $k$ such that $a_{k}/x_{k+1}<1/N_{0}$. Given any
such $k$, choose $N\geq N_{0}$, depending on $k$, such that 
\begin{equation}
\frac{1}{N+1}\leq \frac{a_{k}}{x_{k+1}}<\frac{1}{N}.  \label{eq}
\end{equation}%
Let $R=x_{k+1}$ and $r=a_{k}$. Note that (\ref{eq}) implies 
\begin{equation*}
\frac{R}{r}\leq N+1\text{ and }\frac{x_{k+1}}{N+1}\leq r<\frac{x_{k+1}}{N}<R.
\end{equation*}

Consider $B(0,R)\cap D_{a}=\{x_{j}:j\geq k+1\}$. As $x_{k+1}>Nr$, the
intervals 
\begin{equation*}
\lbrack x_{k+1}-(s+1)r,x_{k+1}-sr)\ \ \ \text{for }s=0,\ldots ,N-1
\end{equation*}%
are contained in $[0,x_{k+1}]$. Each has length $r$ and as $r\geq a_{i}$ for 
$i\geq k$ by the decreasing assumption, each of these intervals must contain
some $x_{j}$. Consequently, 
\begin{equation*}
N_{r/2}(B(0,R)\cap D_{a})\geq \left\lfloor \frac{N}{2}\right\rfloor .
\end{equation*}%
Since we can choose $N$ arbitrarily large and $R$ arbitrarily small, the
fact that $R/r\leq N+1$ forces $\dim _{A}D_{a}=1$.

Next, assume there is some $\varepsilon >0$ such that $a_{j}\geq \varepsilon
\sum_{i=j+1}^{\infty }a_{i}=\varepsilon x_{j+1}$ for all $j$. Putting $%
\lambda =1+\varepsilon $ we have $x_{k}/x_{k+1}\geq \lambda $ for all $k$.
Choose any $0<r<R$ and consider $B(x,R)$ for $x\in D_{a}$.

\noindent \emph{Case 1: $x\leq R$.} Then $B(x,R)\cap D_{a}=[0,x+R]\cap D_{a}$%
. Choose the minimal index $k$ such that $x_{k}\leq x+R$.

\noindent \emph{a)} Assume there is an $i\geq k+1$ such that $a_{i}<r\leq
a_{i-1}$. Then $r>\varepsilon x_{i+1}$ so $[0,x_{i+1}]\cap D_{a}$ is covered
by $1/\varepsilon $ balls of radius $r$. On the other hand, $[x_{i},x+R]\cap
D_{a}=\{x_{j}\}_{j=k}^{i}$, hence 
\begin{equation*}
N_{r}(B(x,R)\cap D_{a})\leq i-k+1+1/\varepsilon .
\end{equation*}%
Since $[0,x_{k}]\subset B(x,R)$, $R\geq x_{k}/2$ and by the lacunarity
assumption $x_{k}\geq \lambda ^{i-k-1}x_{i-1}$. Furthermore, $r\leq x_{i-1},$
so $R/r\geq \lambda ^{i-k-1}/2.$ Since $\lambda >1,$ it follows that given
any $\alpha >0$ there is a constant $c$ depending on $\alpha $ such that 
\begin{equation*}
N_{r}(B(x,R)\cap D_{a})\leq c\left( \frac{\lambda ^{i-k-1}}{2}\right)
^{\alpha }\leq c\left( \frac{R}{r}\right) ^{\alpha }.
\end{equation*}

\noindent \emph{b)} Otherwise, $r\geq a_{k}\geq \varepsilon x_{k+1}$, so $%
N_{r}(B(x,R)\cap D_{a})\leq 1+1/\varepsilon$. Since $R/r\geq 1$, we clearly
have $N_{r}(B(x,R)\cap D_{a})\leq c\left( R/r\right) ^{\alpha }$ for a
suitable constant $c$.

\noindent \emph{Case 2: $x>R$.} Under this assumption there are indices $%
K\leq J$ such that $B(x,R)=\{x_{i}\}_{i=K}^{J}$ and thus $2R\geq
x_{K}-x_{J}\geq (\lambda ^{J-K}-1)x_{J}$. It follows easily from these
observations that if $a_{K} \le r$ or $r \le a_{J}$, then for any $\alpha >0$
there is a constant $c$ depending on $\alpha$ such that $N_{r}(B(x,R)\cap
D_{a})\leq c(R/r)^{\alpha }$.

Thus we can assume there is some $i$ with $K\leq i-1\leq J$ such that $%
\varepsilon x_{i+1}\leq a_{i}<r\leq a_{i-1}\leq x_{i-1}$. As before, $%
N_{r}(B(x,R)\cap D_{a})\leq i-K+1+1/\varepsilon $. Now $x_{K}\geq
x_{i-1}\geq x_{J},$ thus the lacunarity property implies 
\begin{equation*}
2R\geq x_{K}-x_{J}\geq x_{K}-x_{i-1}\geq (\lambda ^{i-K-1}-1)x_{i-1}.
\end{equation*}
Hence 
\begin{equation*}
\frac{R}{r}\geq \frac{1}{2}(\lambda ^{i-k-1}-1)
\end{equation*}%
and, as in the first case, it follows that given any $\alpha >0$, $%
N_{r}(B(x,R)\cap D_{a})\leq c\left( R/r\right) ^{\alpha }$ for a suitable
constant $c$.

Since we can obtain this bound for arbitrary $\alpha >0$, we deduce that $%
\dim_A D_a =0$.
\end{proof}

\begin{proposition}
\label{lacunary} Suppose $a=\{a_{k}\}$ is a decreasing, summable sequence
and that $\dim _{A}D_{a}=0$. If $E\in \mathscr{C}_{a}$, then $\dim _{A}E=0$.
\end{proposition}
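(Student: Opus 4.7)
The plan is to exploit the strong lacunarity guaranteed by Proposition \ref{01fordecreasing}: the hypothesis $\dim_A D_a = 0$ is equivalent to $x_j \geq \lambda\, x_{j+1}$ for all $j$, where $\lambda = 1+\varepsilon$ and $x_j = \sum_{i \geq j} a_i$. In particular, tail sums decay at least geometrically: $x_{j+m} \leq \lambda^{-m} x_j$. I will prove the uniform estimate
\[
N_r(B(x,R) \cap E) \leq C \log(R/r)
\]
for every $x \in E$ and every $0 < r < R$. Since $\log t \leq c_\alpha t^\alpha$ for every $\alpha>0$ and every $t \geq 1$, this immediately yields $\dim_A E \leq \alpha$ for each $\alpha > 0$, hence $\dim_A E = 0$.

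Fix $E$, $x \in E$, and $0 < r < R$. First I count the gaps $U_j$ of $E$ intersecting $B(x,R)$ with $a_j \geq r$. A gap contained in $B(x,R)$ has $a_j \leq 2R$, and at most two gaps can straddle the boundary of $B(x,R)$. Whenever $a_j \leq 2R$, one has
\[
x_j = a_j + x_{j+1} \leq a_j(1 + 1/\varepsilon) \leq 2R(1+1/\varepsilon),
\]
so for any $j' \geq j$ with $a_{j'} \geq r$,
\[
\lambda^{j'-j} \leq \frac{x_j}{x_{j'}} \leq \frac{2R(1+1/\varepsilon)}{r},
\]
forcing $j'-j \leq \log_\lambda(R/r) + O(1)$. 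Since $a$ is decreasing, the indices with $a_j \in [r,2R]$ form a consecutive block, so at most $O(\log(R/r))$ of these ``big'' gaps intersect $B(x,R)$. Removing their intersections with $B(x,R)$ splits it into a disjoint union of at most $O(\log(R/r))$ closed sub-intervals which I call chunks.

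Inside any chunk $J$, every gap of $E$ entirely contained in $J$ has length $<r$. Because $E$ has Lebesgue measure zero, $|J|$ equals the total length of the gap-parts of $E$ meeting $J$. Full small gaps in $J$ contribute at most $x_{k'} \leq a_{k'}(1+1/\varepsilon) < r(1+1/\varepsilon)$, where $k'$ is the smallest index with $a_{k'} < r$; at most two further small gaps can straddle the boundary of $J$, contributing less than $2r$. Therefore $|J| \leq r(3 + 1/\varepsilon)$, and $E \cap J$ is covered by $O(1)$ balls of radius $r$. Summing over the $O(\log(R/r))$ chunks gives the claimed bound $N_r(B(x,R)\cap E) \leq O(\log(R/r))$. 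The main quantitative obstacle is establishing the logarithmic (rather than polynomial) count of big gaps; everything else is routine bookkeeping about gap-straddling at the boundaries of $B(x,R)$ and of the chunks, each of which contributes only an $O(1)$ additive correction.
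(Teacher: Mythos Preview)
Your proof is correct and follows essentially the same approach as the paper: both arguments use the lacunarity characterization from Proposition~\ref{01fordecreasing} to show that the number of ``big'' gaps (those of length at least $r$) meeting $B(x,R)$ is $O(\log(R/r))$, and that each interval between consecutive big gaps has length $O(r)$, yielding $N_r(B(x,R)\cap E)=O(\log(R/r))$. Your write-up is slightly more careful about gaps straddling the boundary of $B(x,R)$ and about the final step $\log t \le c_\alpha t^\alpha$, but the substance is the same.
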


\begin{proof}
As $\dim _{A}D_{a}=0$, the previous proposition implies there is some $%
\varepsilon >0$ such that $a_{j}\geq \varepsilon \sum_{i=j+1}^{\infty }a_{i}$
for all $j$. Choose any $0<r<R\leq a_{1}$ and let $x\in $ $E$.

Pick $k$ such that $a_{k}\leq r<a_{k-1}$. Let $n$ be the total number of
gaps of length $a_{l}$ for some $l<k$ that are contained in $B(x,R)$. The
length of the interval between any two such consecutive gaps is at most 
\begin{equation*}
a_k + \sum_{i=k+1}^{\infty }a_{i}\leq \left(1+\frac{1}{\varepsilon }\right)a_{k}\leq
r\left(1+ \frac{1}{\varepsilon }\right).
\end{equation*}%
It follows that $N_{r}(B(x,R)\cap E)\leq n(1+1/\varepsilon ).$ If the $n$
gaps of length $a_{l}$, $l<k$, that are contained in $B(x,R)$ have lengths $%
a_{l_{1}}, \dots,a_{l_{n}}$ where $l_{j}<l_{j+1}$, then the lacunarity
assumption implies 
\begin{align*}
R& \geq a_{l_{1}}+\sum_{j=2}^{n}a_{l_{j}}\geq \varepsilon
\sum_{j=l_{1}+1}^{\infty }a_{j}+\sum_{j=2}^{n}a_{l_{j}} \\
& \geq (1+\varepsilon )\sum_{j=2}^{n}a_{l_{j}}\geq (1+\epsilon)
^{n-1}a_{l_{n}}\geq (1+\epsilon) ^{n-1}a_{k-1}.
\end{align*}%
Hence $R/r\geq (1+\epsilon) ^{n-1}$ for all $n$. This proves that $\dim
_{A}E=0$.
\end{proof}

\begin{corollary}
\label{coromax} If $a$ is any summable, decreasing sequence and $E\in %
\mathscr{C}_{a}$ is any complementary set, then $\dim _{A}E\leq \dim
_{A}D_{a}$.
\end{corollary}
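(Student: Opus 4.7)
The plan is to observe that this corollary is a direct consequence of the two preceding propositions, and the proof amounts to a case split on the value of $\dim_A D_a$.

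First, I would invoke Proposition \ref{01fordecreasing}, which tells us that $\dim_A D_a \in \{0,1\}$. So there are only two cases to consider.

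In the case $\dim_A D_a = 1$, the inequality $\dim_A E \leq \dim_A D_a$ is automatic: since $E \subset \mathbb{R}$, we always have $\dim_A E \leq 1$ (for instance because $\dim_A E \leq \dim_A \mathbb{R} = 1$, or because the covering number $N_r(B(x,R) \cap E)$ is trivially bounded by a constant times $R/r$). So in this case there is nothing further to prove.

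In the case $\dim_A D_a = 0$, Proposition \ref{lacunary} applies directly: it says precisely that if $\dim_A D_a = 0$, then every $E \in \mathscr{C}_a$ satisfies $\dim_A E = 0$. Hence $\dim_A E = 0 = \dim_A D_a$, and the desired inequality holds with equality. There is no real obstacle here, since the substantive work has already been done in Propositions \ref{01fordecreasing} and \ref{lacunary}; the corollary simply packages their conclusions together.
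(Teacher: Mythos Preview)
Your proof is correct and follows exactly the same approach as the paper: split on whether $\dim_A D_a$ equals $1$ (trivial) or $0$ (invoke Proposition~\ref{lacunary}). The paper's version is simply more terse.
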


\begin{proof}
If $\dim_A D_a=1$ it is clearly maximal. Otherwise it is $0$ and then all
other complementary sets also have Assouad dimension $0$ by the Proposition.
\end{proof}

\begin{corollary}
Suppose $\{a_j\}$ is a positive, decreasing sequence with the property that
there are positive constants $c_1, c_2$ and $\lambda <1$ such that $%
c_1\lambda^j \leq a_j \leq c_2\lambda^j$. If $E \in \mathscr{C}_{a}$, then $%
\dim_AE=0$.
\end{corollary}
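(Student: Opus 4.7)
The plan is to reduce to the lacunarity criterion from Proposition \ref{01fordecreasing} and then invoke Proposition \ref{lacunary} (equivalently, Corollary \ref{coromax}). The key observation is that the geometric two-sided bound $c_1\lambda^j \leq a_j \leq c_2\lambda^j$ immediately gives the hypothesis $a_j \geq \varepsilon \sum_{i=j+1}^\infty a_i$ needed to conclude $\dim_A D_a = 0$.

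More precisely, I would first bound the tail:
\begin{equation*}
\sum_{i=j+1}^{\infty} a_i \;\leq\; c_2 \sum_{i=j+1}^{\infty} \lambda^i \;=\; \frac{c_2 \lambda^{j+1}}{1-\lambda}.
\end{equation*}
Combining this with the lower bound $a_j \geq c_1 \lambda^j$ yields
\begin{equation*}
\frac{a_j}{\sum_{i=j+1}^{\infty} a_i} \;\geq\; \frac{c_1 \lambda^j (1-\lambda)}{c_2 \lambda^{j+1}} \;=\; \frac{c_1(1-\lambda)}{c_2\lambda},
\end{equation*}
which is a positive constant $\varepsilon$ independent of $j$. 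Hence $a_j \geq \varepsilon \sum_{i > j} a_i$ for every $j$.

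By Proposition \ref{01fordecreasing}, this lacunary condition forces $\dim_A D_a = 0$. Finally, Proposition \ref{lacunary} (or equivalently Corollary \ref{coromax}) tells us that whenever $\dim_A D_a = 0$, every set $E \in \mathscr{C}_a$ satisfies $\dim_A E = 0$, completing the proof. There is no substantive obstacle here; the only thing to verify is the constant-ratio tail estimate above, so the corollary is essentially a direct application of the machinery already developed.
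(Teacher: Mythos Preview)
Your proof is correct and follows essentially the same route as the paper: verify the lacunarity condition $a_j \geq \varepsilon \sum_{i>j} a_i$, apply Proposition~\ref{01fordecreasing} to get $\dim_A D_a = 0$, and then invoke Corollary~\ref{coromax}. The only difference is that you spell out the tail estimate explicitly, whereas the paper leaves that verification to the reader.
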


\begin{proof}
By Proposition \ref{01fordecreasing}, $\dim_A D_a=0$. Now appeal to the
previous Corollary.
\end{proof}

\subsection{Minimal Assouad dimension}

\label{section minimal assouad dimension}

Next, we prove that the decreasing Cantor set has the minimal Assouad
dimension from the family $\mathscr{C}_{a}$.

\begin{theorem}
\label{LowerBdDec}If $a$ is any summable, decreasing sequence, then $\dim
_{A}E\geq \dim _{A}C_a$ for any complementary set $E\in \mathscr{C}_{a}$.
\end{theorem}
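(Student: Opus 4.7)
The plan is to exploit the characterization of $\dim_{A}C_{a}$ from Remark~\ref{DimComment}, equation~(\ref{AdimAlt}), namely
\[
\dim_{A}C_{a}=\limsup_{n}\sup_{k}\frac{n\log 2}{\log(s_{k}/s_{k+n})},
\]
together with a pigeonhole argument on the cluster structure of $E$. If $\dim_{A}C_{a}=0$ there is nothing to prove, so fix $\alpha<\alpha'<\dim_{A}C_{a}$. By the formula, there are infinitely many pairs $(k,n)$, with $n\to\infty$, satisfying $2^{n}\geq (s_{k}/s_{k+n})^{\alpha'}$; combined with the standard inequality $s_{j+1}\leq s_{j}/2$ (which follows from $2^{j+1}s_{j+1}\leq 2^{j}s_{j}$), this forces $s_{k}/s_{k+n}\geq 2^{n}\to\infty$ along the subsequence.

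For each such pair I decompose $E$ at two scales. Removing the $2^{k}-1$ largest gaps from $[0,L]$ splits $E$ into $2^{k}$ \emph{big clusters} $J_{1},\ldots,J_{2^{k}}$ with $\sum_{i}|J_{i}|=2^{k}s_{k}$; removing the $2^{k+n}-1$ largest gaps analogously produces $2^{k+n}$ \emph{small clusters} of total length $2^{k+n}s_{k+n}\leq 2^{k}s_{k}$, and the numbers $m_{i}$ of small clusters inside each $J_{i}$ sum to $2^{k+n}$. The heart of the argument is to locate an interval $I\subset[0,L]$ of length $|I|\leq C s_{k}$ containing at least $c\,2^{n}$ small clusters. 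The density identity $\sum m_{i}/\sum|J_{i}|=2^{n}/s_{k}$ yields some $J^{*}$ with $m_{*}/|J^{*}|\geq 2^{n}/s_{k}$; when $|J^{*}|\geq s_{k}$ I subdivide $J^{*}$ into $\lceil |J^{*}|/s_{k}\rceil$ pieces of length $\leq s_{k}$ and apply pigeonhole once more, while if $|J^{*}|<s_{k}$ I instead use the big cluster of maximum raw count $m\geq 2^{n}$ (basic pigeonhole), whose length is at most $2^{k}s_{k}$, and argue via its subdivision.

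Given $I$, I pick $x\in I\cap E$, set $R$ of order $s_{k}$, and take $r$ of order $s_{k+n}$; a bounded shift $n\mapsto n+O(1)$ combined with $s_{j+1}\leq s_{j}/2$ ensures that the smallest of the top $2^{k+n}-1$ gaps is at least a fixed multiple of $s_{k+n}$, so adjacent small clusters are genuinely $2r$-separated. Then $N_{r}(B(x,R)\cap E)\geq c\,2^{n}$ and $R/r\leq C s_{k}/s_{k+n}$, whence
\[
\frac{N_{r}(B(x,R)\cap E)}{(R/r)^{\alpha}}\geq \frac{c\,2^{n}}{(C s_{k}/s_{k+n})^{\alpha}}\geq c'\,(s_{k}/s_{k+n})^{\alpha'-\alpha}\longrightarrow\infty
\]
as $n\to\infty$, forcing $\dim_{A}E\geq \alpha$. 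Letting $\alpha\nearrow\dim_{A}C_{a}$ completes the proof.

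I expect the main difficulty to be the pigeonhole in the case where the optimising $(k,n)$ in the $\limsup$-$\sup$ formula forces $k$ itself to grow (since $\limsup$ and $\sup$ need not commute): then $2^{k}s_{k}$ is much smaller than $L$, the max-density cluster may have $|J^{*}|<s_{k}$, and one must carefully track the constants in the subdivision of the max-count cluster. A secondary, largely bookkeeping, obstacle is the identification of $a_{2^{k+n}-1}$ with a constant multiple of $s_{k+n}$, which should succumb to a bounded shift of indices.
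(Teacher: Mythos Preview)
Your overall strategy mirrors the paper's: exploit the formula for $\dim_A C_a$, decompose $E$ at the two scales $2^k$ and $2^{k+n}$, and pigeonhole to locate a dense cluster. The pigeonhole step is sound; in fact, simply subdividing $\bigcup_i J_i$ (total length $2^k s_k$) into at most $2^{k+1}$ pieces of length at most $s_k$ and counting small-cluster left endpoints already produces a piece containing at least $2^{n-1}$ of them, so your case split on $|J^*|$ is unnecessary.

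However, what you label the ``secondary, largely bookkeeping, obstacle'' is in fact the crux of the proof and does \emph{not} succumb to a bounded shift. To make the small clusters $2r$-separated with $r$ of order $s_{k+n}$, you need $a_{2^{k+n}-1}\geq c\,s_{k+n}$ for some fixed $c>0$. The inequality $s_{j+1}\leq s_j/2$ yields only the \emph{upper} bound $a_{2^{k+n}-1}\leq s_{k+n-1}$; the relevant \emph{lower} bound is $a_{2^{m}}\geq s_m-2s_{m+1}$, and this is vacuous whenever $s_{m+1}/s_m$ is close to $1/2$. A shift $n\mapsto n+O(1)$ helps only if some ratio $s_{j+1}/s_j$ with $|j-(k+n)|=O(1)$ drops below $1/2-\delta$ for a \emph{fixed} $\delta>0$, and nothing in the hypotheses guarantees this. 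The paper resolves this via a genuine dichotomy: either (Case~1) there is a uniform bound $N_0$ on the length of runs of consecutive ratios exceeding $1/2-\delta$, in which case the shift is available and the paper finishes with a H\"older-inequality pigeonhole on the $J_i$; or (Case~2) arbitrarily long such runs exist, which forces $\dim_A C_a=1$, and one argues separately (via invariance of the upper box dimension, or, when $\overline{\dim}_B C_a<1$, by locating the ratio drop that terminates a long run of near-$1/2$ ratios and running the same argument there). You will need to supply this case analysis.

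A minor side remark: formula~(\ref{AdimAlt}) as you invoke it gives $n\to\infty$ but not obviously $k\to\infty$, whereas you eventually need $R$ (of order $s_k$) below an arbitrary threshold $\rho$. The equivalent formulation~(\ref{formula upper cantor}) yields both $k$ and $n$ arbitrarily large; cite that one instead.
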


\begin{proof}
We can assume $\dim_A C_a >0$, else there is nothing to prove. The proof is
divided into two cases depending on whether or not there are frequently gaps
that are not `too small'.

First assume there is some $\delta >0$ and a constant $N_{0}$ such that
there is no string of $N_{0}$ consecutive ratios, $s_{j}/s_{j-1}$, all
exceeding $1/2-\delta $. Let $\gamma =\dim _{A}C_a$, fix $0<\epsilon <\gamma
/3$ and suppose $E\in \mathscr{C}_{a}$ has $\dim _{A}E=\alpha <\gamma
-3\varepsilon $. Indeed, assume%
\begin{equation}
N_{r}(B(x,R)\cap E)\leq c_{E}\left( \frac{R}{r}\right) ^{\alpha +\epsilon }\
\ \forall \ 0<r < R<\rho .  \label{DimE}
\end{equation}

Pick $k_{0}$ so large that $2^{k}s_{k}<\rho $ if $k\geq k_{0}$. By (\ref%
{formula upper cantor}) we can pick $k$ and $n$ arbitrarily large such that 
\begin{equation*}
2^n\geq \left( \frac{s_{k}}{s_{k+n}}\right) ^{\gamma -\epsilon } \geq
2^{n\epsilon }\left( \frac{s_{k}}{s_{k+n}}\right) ^{\gamma -2\epsilon }.
\end{equation*}

By hypothesis, there is an index $j$ with $k+n-N_{0}<j\leq k+n$ and $%
s_{j+1}/s_{j}<1/2-\delta $. This ensures that 
\begin{equation}
2\delta s_{j}\leq s_{j}-2s_{j+1}=2^{-j}(a_{2^{j}}+\cdot \cdot \cdot
+a_{2^{j+1}-1})\leq a_{2^{j}}  \label{Gapsize}
\end{equation}%
and therefore 
\begin{equation}
2^{n}\geq 2^{n\epsilon }\left( \frac{s_{k}}{s_{j}}\right) ^{\gamma
-2\epsilon }\geq c2^{n\epsilon }\left( \frac{s_{k}}{a_{2^{j}}}\right)
^{\gamma -2\epsilon }  \label{Abound}
\end{equation}%
for a suitable constant $c$.

Removing from $[0,L]$ the complementary gaps of $E$ that have lengths $%
a_{1},...,a_{2^{k}-1}$ (i.e., the gaps of levels $1, \ldots,k$), we obtain
the set $J_{1}\cup \ldots \cup J_{M_{k}}\cup \{\text{singletons}\}$, where $%
J_{i} $ are non-trivial closed intervals and $M_{k} < 2^{k}$. Note that 
\begin{equation*}
\sum_{i=1}^{M_{k}}|J_{i}|=2^{k}s_{k}
\end{equation*}%
as this is the sum of the remaining gaps to place.

Let $b_{i}$ be the number of gaps of level $j+1$ contained in $J_{i}$. Thus
if $x$ is an endpoint of $J_{i}$ and $r=a_{2^j}/2$ we have 
\begin{equation*}
N_{r}(B(x,|J_{i}|)\cap E)\geq b_i.
\end{equation*}%
Moreover, $\sum b_i=2^{j}$, so combining (\ref{Abound}) with H\"{o}lder's
inequality gives 
\begin{align*}
\sum b_{i}& \geq c 2^{j-n}2^{n\epsilon }\left( \frac{s_{k}}{a_{2^{j}}}\right)
^{\gamma -2\epsilon } \\
& \geq c 2^{k-N_{0}}2^{n\epsilon }\left( \frac{2^{-k}\sum_{i=1}^{M_{k}}|J_{i}|%
}{r}\right) ^{\gamma -2\epsilon } \\
& \geq c 2^{n\epsilon }\left( \frac{2^k}{M_{k}}\right) ^{(1-(\gamma
-2\epsilon ))}\frac{\sum |J_{i}|^{\gamma -2\epsilon }}{r^{\gamma -2\epsilon }%
} \\
& \geq c 2^{n\epsilon }\frac{\sum |J_{i}|^{\gamma -2\epsilon }}{r^{\gamma
-2\epsilon }}.
\end{align*}%
Therefore, there exists $i$ such that 
\begin{equation*}
b_{i}\geq c2^{n\epsilon }\left( \frac{|J_{i}|}{r}\right) ^{\gamma -2\epsilon
},
\end{equation*}%
and hence%
\begin{equation*}
N_{r}(B(x,|J_{i}|)\cap E)\geq c2^{n\epsilon }\left( \frac{|J_{i}|}{r}\right)
^{\gamma -2\epsilon }.
\end{equation*}%
As $|J_{i}|>0$ we have $b_{i}\geq 1$ so $J_{i}$ contains some gap of level $j+1$.
This guarantees that $r\leq \left\vert J_{i}\right\vert $. Furthermore, $%
\left\vert J_{i}\right\vert \leq $ $2^{k}s_{k}<\rho $. As $k$ and $n$ can be
made arbitrarily large that contradicts (\ref{DimE}).

Now assume that for any $\delta >0$ and integer $N$, there is a string of at
least $N$ consecutive ratios $s_{j}/s_{j-1}>1/2-\delta =\lambda $. In this
case, we have $s_{k}/s_{k+N}<\lambda ^{-N}$ for some $k$ and hence 
\begin{equation*}
\sup_{k}\left( \frac{N\log 2}{\log s_{k}/s_{k+N}}\right) \geq \frac{\log 2}{%
|\log \lambda |}.
\end{equation*}%
As we can do this for all $N$, it follows from (\ref{AdimAlt}) that $\dim
_{A}C_a\geq \log 2/|\log \lambda |$. Since $\lambda $ can be chosen
arbitrarily close to $1/2$, we see that $\dim _{A}C_a=1$. Thus we need to
show that any complementary set $E$ also has Assouad dimension one.

Note that if $\overline{\dim }_{B} C_a=1,$ then $\overline{\dim }_{B}E=1$
because of the invariance of the upper box dimension for complementary sets.
Hence also $\dim _{A}E=1$ and we are done.

Otherwise, 
\begin{equation}
\overline{\dim }_{B}C_a=\limsup_{n\rightarrow \infty }\frac{n\log 2}{|\log
s_{n}|}<1.  \label{ofrmuladbs}
\end{equation}%
(see \cite{GMS}), hence there must exist some $\lambda _{0}<1/2$ such that $%
s_{j+1}/s_{j}\leq \lambda _{0}$ for infinitely many $j$'s. Fix $\epsilon <1/2
$ and pick $\lambda _{0}<\lambda <1/2$ such that $\lambda ^{-(1-\epsilon )}<2
$. By our hypothesis and this observation, we can take $N$ large and an
appropriate $k $ such that all the ratios $%
s_{k+1}/s_{k},...,s_{k+N}/s_{k+N-1}$ are greater than $\lambda ,$ but $%
s_{k+N+1}/s_{k+N}\leq \lambda $. Using (\ref{Gapsize}) gives the estimate 
\begin{align*}
2^{N}& \geq \lambda ^{-N(1-\epsilon )}\geq \left( \frac{s_{k}}{s_{k+N}}%
\right) ^{1-\epsilon }\geq 2^{N\epsilon }\left( \frac{s_{k}}{s_{k+N}}\right)
^{1-2\epsilon } \\
& \geq c2^{N\epsilon }\left( \frac{s_{k}}{a_{2^{k+N}}}\right) ^{1-2\epsilon }
\end{align*}%
where $c>0$ depends only on $\lambda $ and $\epsilon $. At this point we
proceed exactly as in the previous case, but letting $b_{i}$ denote the
number of gaps of level $k+N+1$. We omit the details.
\end{proof}

\subsection{The attainable values for the Assouad dimension}

\label{structure}

In this subsection, our interest is in determining the possible values for
the $\dim _{A}E$ when $E\in \mathscr{C}_{a}$ . We start with the following
example that shows that in general this set need not be an interval. In
fact, in this example, only two values are attained.

\begin{example}
\label{theexample} Assume $\{g_k\}_{k=0}^\infty$ is a strictly decreasing,
summable sequence satisfying

\begin{enumerate}
\item $\sum_{j\ge k+1} 2^jg_j< g_k/2$ (in particular, $2^kg_k\to0$);

\item $(g_{k-1}/g_k)^{1/k}\ge 2^{k+4}$.
\end{enumerate}

Let $a$ be the decreasing sequence where each $g_j$ occurs $2^j$ times. Then
each subset in $\mathscr C_a$ has either Assouad dimension $0$ or $1$.
\end{example}

\begin{proof}
Fix $E\in\mathscr C_a$. For each $k\ge1$, let $m_k$ be the maximum number of
complementary intervals of length $g_k$ between two consecutive gaps of
length strictly greater than $g_k$; here we consider also as gaps the
unbounded intervals $(-\infty,0)$ and $(L, \infty)$.

\begin{claim}
Suppose $\dim_A E<1$. Then $\{m_k\}$ is bounded.
\end{claim}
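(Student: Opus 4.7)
The plan is to prove the contrapositive: if $\{m_k\}$ is unbounded, then $\dim_A E = 1$. Fix a subsequence $k_j$ along which $m_{k_j}\to\infty$, and assume for contradiction that $\dim_A E<1$; pick $\alpha\in(\dim_A E,1)$ together with constants $c,\rho>0$ so that
\begin{equation*}
N_r(B(x,R)\cap E)\leq c\left(\frac{R}{r}\right)^\alpha\qquad \text{for all } x\in E,\ 0<r<R<\rho.
\end{equation*}

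For each such $k=k_j$, let $S_k\subset[0,L]$ be a segment realising $m_k$: it is bounded by two consecutive gaps of length $>g_k$ (or by the endpoints of $[0,L]$) and contains exactly $m_k$ gaps of length $g_k$ together with (possibly) some gaps of length $<g_k$. Since $E$ has Lebesgue measure zero, $|S_k|$ equals the sum of the lengths of the gaps inside it; condition (1) of the example bounds the total length of all gaps of level $>k$ in $[0,L]$ by $g_k/2$, and hence
\begin{equation*}
m_k g_k \ \leq\ |S_k|\ \leq\ m_k g_k+\tfrac{1}{2}g_k.
\end{equation*}

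Since $g_k\to 0$ and $m_k\to\infty$ along $k_j$, I would set $n=n(k):=\min\bigl(m_k,\lfloor\rho/(2g_k)\rfloor\bigr)$, so that $n\to\infty$. Listing the length-$g_k$ gaps inside $S_k$ left-to-right as $(a_1,b_1),\ldots,(a_{m_k},b_{m_k})$, take $x:=a_1\in E$ and $R:=b_n-a_1$; the same accounting as above yields
\begin{equation*}
n g_k\ \leq\ R\ \leq\ n g_k+\tfrac{1}{2}g_k\ \leq\ \rho
\end{equation*}
for all sufficiently large $k$. With $r:=g_k/3$, any closed ball of radius $r$ has diameter $2g_k/3<g_k$ and therefore cannot meet $E$ on both sides of any of the $n$ gaps $(a_i,b_i)$. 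Since each $a_i,b_i$ lies in $E$, these $n$ gaps split $E\cap[x,x+R]$ into at least $n+1$ non-empty pieces that are pairwise $r$-separated, whence
\begin{equation*}
N_r(B(x,R)\cap E)\ \geq\ n+1,\qquad \frac{R}{r}\ \leq\ 3n+\tfrac{3}{2}.
\end{equation*}
Substituting into the standing Assouad estimate gives $n+1\leq c(3n+3/2)^\alpha$ for arbitrarily large $n$, which is impossible since $\alpha<1$. This contradiction proves $\dim_A E\geq 1$, hence $\dim_A E=1$.

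The only delicate point is keeping $R<\rho$ when $m_k g_k$ does not tend to zero; this is what forces the truncation $n\leq \lfloor\rho/(2g_k)\rfloor$ together with the use of $g_k\to 0$. Condition (2) of the example is not needed here — only condition (1), which guarantees that smaller-level gaps occupy a negligible fraction of the segment $S_k$.
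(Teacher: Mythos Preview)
Your proof is correct and follows essentially the same idea as the paper's: choose a ball covering a run of $g_k$-gaps, take $r$ a fixed fraction of $g_k$, lower-bound $N_r$ by the number of such gaps, and use condition~(1) to bound $R/r$ linearly in that number, forcing $m_k^{1-\alpha}\le C$. The only substantive difference is how the constraint $R<\rho$ is met: you truncate to $n=\min(m_k,\lfloor\rho/(2g_k)\rfloor)$, while the paper instead picks $k_0$ with $2^{k_0}g_{k_0}\le\rho$ (using $2^kg_k\to 0$ from condition~(1)) and implicitly uses $m_k\le 2^k$ to get $R\le(m_k+1)g_k\lesssim 2^kg_k\le\rho$. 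Your truncation is arguably cleaner, since it does not rely on the bound $m_k\le 2^k$.
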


\begin{proof}[Proof of the Claim.]
Let $\dim_A E=\beta<\alpha<1$ and choose $c,\rho$ such that 
\begin{equation*}
N_r(B(x, R)\cap E)\le c\left(\frac{R}{r}\right)^\alpha \ \ \ \ \forall x\in
E, \ 0< r < R \le \rho. 
\end{equation*}
Choose $k_0$ such that $\rho\ge 2^{k_0}g_{k_0}$ and let $k\ge k_0$.

Consider a subinterval bounded by two consecutive gaps of length strictly
greater than $g_k$, where $m_k$ occurs. Let $x$ be an endpoint of the
subinterval, i.e., $x$ is an endpoint of one of the bounding gaps of size
greater than $g_k$. Let $R$ be the distance between the bounding gaps and $%
r=g_k/2$. As there are $m_k$ gaps of length $g_k$ between the two bounding
intervals and no gaps of size $g_j$, $j < k$, by the first hypothesis 
\begin{align*}
m_kg_k\le R &\le m_kg_k+\sum_{i>k}2^ig_i \\
&\le (m_k+1)g_k. \ \ \ \ \ \ \ 
\end{align*}
Note that $N_r(B(x,R)\cap E)\ge m_k$ since at least one $r$-ball is needed
for each left endpoint of intervals of size $g_k$. Hence 
\begin{align*}
m_k\le N_r(B(x,R)\cap E) \le c\left(\frac{R}{r}\right)^\alpha&\le c\left(%
\frac{(m_k+1)g_k}{g_k}\right)^\alpha \le 2^\alpha c m_k^\alpha.
\end{align*}
Thus $m_k^{1-\alpha}\le 2^\alpha c$. As $\alpha<1$ this proves the sequence $%
\{m_k\}$ is bounded, as claimed.
\end{proof}

Next, we show that any complementary set $E \in \mathscr C_a$, with $\{m_k\}$
bounded, has $\dim_A E=0$. To prove this, we will show that for each $%
\alpha=1/N$, there is a constant $c_N$ such that 
\begin{equation}  \label{condassouad}
N_r(B(x,R)\cap E)\le c_N\left(\frac{R}{r}\right)^\alpha \ \ \forall \
0<r<R\le 1.
\end{equation}
The choice of $c_N$ will be clear from the proof.

\noindent \emph{Case 1:} $g_N/2\le r<R\le 1$. Since 
\begin{equation*}
\sum_{i>N}2^ig_i< \frac{g_N}{2}\le r,
\end{equation*}
a ball of radius $r$ will cover the intervals lying between two consecutive
gaps of length greater than $g_{N}$. Hence 
\begin{equation*}
N_r(B(x,R)\cap E)\le 2\:\#\{\text{gaps} \ g_j, j\le N\}<2^{N+2}.
\end{equation*}
As long as $c_N\ge2^{N+2}$ then (\ref{condassouad}) holds in this case.

\noindent \emph{Case 2: $g_{k+1}/2<r\le g_k/2$, $R\ge g_{k-1}/2$ for some $%
k\ge N$.} To bound $N_r(B(x,R)\cap E)$ it will be sufficient to take all the
endpoints of gaps of size $g_j$, $j\le k+1$ by hypothesis $(1)$. Therefore 
\begin{equation*}
N_r(B(x,R)\cap E)\le 2\cdot 2^{k+2}+2,
\end{equation*}
the last $2$ to count the two endpoints on $[0, L]$. Moreover, by hypothesis 
$(2)$ 
\begin{align*}
\left(\frac{R}{r}\right)^\alpha\ge \left(\frac{g_{k-1}/2}{g_k/2}%
\right)^\alpha\ge \left(\frac{g_{k-1}}{g_k}\right)^{1/k}\ge 2^{k+4},
\end{align*}
and that together with the previous estimate shows that $c_N\ge 1$ works.

\noindent \emph{Case 3:} $r=g_k/2$, $g_k/2< R < g_{k-1}/2$. Consider $B(x,R)$%
. This interval cannot completely contain any gap of size $g_j$, $j\le k-1$
(although either endpoint of $B(x,R)$ could be contained in such a gap).
Thus $B(x,R)$ intersects $n_k\le m_k$ gaps of size $g_k$ and contains no full gaps of
size $g_i$, $i<k$. Hence 
\begin{equation*}
N_r(B(x,R)\cap E)\le 2n_k+2\le 2c_0+2
\end{equation*}
where $c_0$ is a bound for $\{m_k\}$. As $R/r>1$, we just pick $c_N>2c_0+2$
to get 
\begin{equation*}
N_r(B(x,R)\cap E)\le c_N\left(\frac{R}{r}\right)^{1/N}.
\end{equation*}

\noindent \emph{Case 4: $g_{k+1}/2<r< g_k/2$, $g_k/2 < R< g_{k-1}/2$.}
Again, $B(x,R)$ does not contain any gaps of size $g_j$, $j\le k-1$. If it
also does not fully contain any gaps of size $g_k$, then it intersects at
most $m_{k+1}$ gaps of size $g_{k+1}$ and it follows that 
\begin{equation*}
N_r(B(x,R)\cap E)\le 2m_{k+1}+2\le 2 c_0+2
\end{equation*}
and we are fine. Otherwise, it intersects at most $m_k$ gaps of size $g_k$.
Between any two of these (or between such a gap and the end of the interval $%
B(x, R)$), there are at most $m_{k+1}$ gaps of size $g_{k+1}$, so 
\begin{equation*}
N_r(B(x,R)\cap E)\le 2 (m_k+1)m_{k+1}+2m_k+2
\end{equation*}
which is again uniformly bounded.

\noindent \emph{Case 5: $g_{k+1}/2<r< g_k/2$, $g_{k+1}/2 < R< g_{k}/2$.}
This is essentially the same as the easy part of Case 4 as $B(x, R)$ admits
no gaps of size $g_j$, $j\le k$.

Finally, we remark that the Cantor rearrangement has $m_k=1$ so its Assouad
dimension is $0$. Moreover, $\dim_A D_a=1$.
\end{proof}

Next we show that under the doubling hypothesis on the gap sequence the set
of attained values is an interval.

\begin{theorem}
\label{central doubling interval} Let $C\{r_{j}\}$ be a central Cantor set
whose gap sequence is doubling. Then for any $s\in \lbrack \dim
_{A}C\{r_{j}\},1]$ there is a complementary set $E$ of the gap sequence of $%
C\{r_{j}\}$ such that $\dim _{A}E=s$.
\end{theorem}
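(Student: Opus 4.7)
The endpoints of the target interval are already in hand: $s=\dim_A C\{r_j\}$ is realised by $C_a$ itself (it is minimal by Theorem \ref{LowerBdDec}, and of course achievable), and $s=1$ is realised by $D_a$, since the doubling hypothesis on $a$ was seen in Section \ref{prelim} to force $\dim_A D_a=1$. What remains is to produce, for each intermediate $s\in(\dim_A C_a,1)$, a complementary set $E_s\in\mathscr{C}_a$ with $\dim_A E_s=s$.

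My plan is to implant, at a sparse sequence of well-separated scales, local ``pockets'' of prescribed dimension $s$ into the Cantor construction of $C_a$. By Proposition \ref{equivalentarrangement} together with doubling, I may assume $a$ is literally the central-Cantor gap sequence: $2^n$ copies of length $g_n$ at level $n+1$, with $\rho:=\inf_n g_{n+1}/g_n>0$ and $\sup_n g_{n+1}/g_n<1/2$. I will fix an increasing sequence $n_1<n_2<\cdots$ with $n_{k+1}-n_k\to\infty$. At level $n_k$ the Cantor construction has produced $2^{n_k}$ intervals; I single out one of them, say $I_k$, \emph{halt} the Cantor construction inside $I_k$, and redeploy the gaps originally destined for the subtree of $I_k$ as a pocket of dimension $s$ inside $I_k$. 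In every other level-$n_k$ interval the Cantor construction continues as usual.

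The pocket inside $I_k$ is built by \emph{regrouping consecutive levels}. Levels $n_k+1,\ldots,n_k+m_k$ collectively supply $\sim 2^{n_k+m_k}$ gaps of sizes geometrically distributed in $[g_{n_k+m_k},g_{n_k+1}]$, a window of ratio comparable to $\rho^{m_k}$. Using $\sim 2^{sm_k}$ of them as the long separating gaps of a coarser Cantor-like layer inside $I_k$ (the remaining, much shorter gaps being absorbed as fillers of negligible length), one produces a local structure in which $N_r(B(x,R))\sim (R/r)^s$ for scales $r,R$ lying inside the pocket. The target exponent $s$ is selected by tuning $m_k$, which is an elementary counting step once the doubling bounds on $g_n$ are in hand.

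To verify $\dim_A E_s=s$ I would apply the characterisation (\ref{AdimAlt}). The lower bound $\dim_A E_s\ge s$ follows from centering a ball on a pocket, where by construction the local count is $(R/r)^s$ up to constants. The upper bound exploits the sparsity $n_{k+1}-n_k\to\infty$: any ball $B(x,R)$ interacts non-trivially with at most one pocket, and outside of pockets $E_s$ sits inside $C_a$ and contributes at most the baseline exponent $\dim_A C_a\le s$. The delicate step, and the main obstacle of the argument, will be the ``seam'' analysis: showing that balls straddling a pocket and the surrounding Cantor structure still satisfy $N_r(B(x,R))\le c(R/r)^s$. This is where the interplay between the doubling bounds and the tuning of $m_k$ and $n_k$ must be managed carefully, so that at scales just above or just below a pocket the covering number is dominated either by the pocket exponent $s$ or by $\dim_A C_a$, whichever is larger, and never by an uncontrolled mix of the two.
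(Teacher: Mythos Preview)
Your high-level architecture---splice a dimension-$s$ part into a baseline with dimension $\dim_A C_a$---matches the paper's, but the paper executes it quite differently and in a way that sidesteps precisely the difficulty you flag as the ``main obstacle.''  Rather than embed pockets inside the Cantor construction, the paper writes $E=\mathcal{A}\cup\mathcal{B}$ as two \emph{separated} pieces and invokes finite stability of the Assouad dimension, so that $\dim_A E=\max(\dim_A\mathcal{A},\dim_A\mathcal{B})$.  Here $\mathcal{B}$ is (a translate of) the complementary set in $\mathscr{C}_b$ corresponding to $C\{r_j\}$, where $b$ is the subsequence of gaps left over; doubling makes $b$ equivalent to $a$, so $\dim_A\mathcal{B}=\dim_A C\{r_j\}$ by Proposition~\ref{equivalentarrangement}.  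No seam analysis is needed at all.

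The more serious gap is your pocket construction itself.  Placing ``$\sim 2^{sm_k}$ long separating gaps'' in $I_k$ and ``tuning $m_k$'' does not produce local covering exponent $s$: a single layer of comparably-sized separators gives $N_r\sim R/r$, exponent~$1$, regardless of how many you use or how you choose $m_k$.  What is actually required is a \emph{hierarchical} arrangement across many scales, and this is what the paper builds explicitly.  Its set $\mathcal{A}$ is a countable accumulation of finite sets $A_k$, where each $A_k$ consists of the $2^{k+1}$ endpoints of the $k$th-step intervals of a (rescaled) central Cantor set of fixed ratio $\gamma=2^{-1/s}$; the gaps at level $j$ of $A_k$ are chosen from $a$ to have length $\sim d_k\gamma^j$ for $j=0,\dots,k$, which doubling guarantees are available.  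The exponent $s$ is encoded in the ratio $\gamma$, not in a depth parameter, and the verification that $\dim_A\mathcal{A}=s$ is a several-case computation of $N_r(B(x,R)\cap\mathcal{A})$ depending on how $r$ and $R$ sit relative to the scales $\rho_k=d_k\gamma^k$ and $R_k=\operatorname{diam}A_k$.  Your sketch does not supply any of this, and without it neither the lower bound $\dim_A E_s\ge s$ nor the upper bound can be established.
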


\begin{proof}
We assume $s<1$ for otherwise we take $E$ to be the decreasing rearrangement
of Assouad dimension 1. We will construct $E$ in such a way that $E=\mathcal{%
A}\cup \mathcal{B}$, with $\dim _{A}\mathcal{A}=s$ and $\dim _{A}\mathcal{B}%
=\dim _{A}C\{r_{j}\}$. Because of the finite stability of the Assouad
dimension (see \cite{luukkainen1998assouad}) it will follow that $\dim
_{A}E=s$.

The set $\mathcal{A}$ will be the union of countably many finite sets $A_{k}$%
. Each $A_{k}$ will consist of the $2^{k+1}$ endpoints of the $k$'th step
intervals in a rescaled approximation of a central Cantor set of fixed ratio 
$\gamma <1/2 ,$ where $\gamma ^{s}=1/2$. (This choice is made so that the
central Cantor set with ratio $\gamma $ has Assouad dimension $s)$. We will
position the sets so that $\max A_{k+1}=\min $ $A_{k}$, with $0$ being the
unique accumulation point of $\mathcal{A}$.

Let $a=\{a_{j}\}$ be the gap sequence of $C\{r_{j}\}$. We will let $%
b=\{b_{j}\}$ be the subsequence of $a$ consisting of the gaps that remain
after forming $\mathcal{A}$. We will show that $b$ is equivalent to $a$ and
then call upon Proposition \ref{equivalentarrangement} to construct a
suitable set $\mathcal{B}$. We leave the details to the end of the proof.

We begin by fixing notation that will be used throughout the proof. Let $%
g_{k}=r_{1}\cdot \cdot \cdot r_{k-1}(1-2r_{k})$ be the length of the gaps at
step $k$ in the construction of $C\{r_{j}\}$ and let $\alpha >1$ be a
doubling constant for $C\{r_{j}\}$, meaning, 
\begin{equation*}
\frac{1}{\alpha }g_{k+1}\leq g_{k}\leq \alpha g_{k+1}\text{ for all }k.
\end{equation*}

The sets $A_{k}$ will be constructed by an iterative process. To start the
process, we let $d_{1}=g_{5}$ and $n_{1}+1=\max \{i:g_{i}\geq d_{1}\}$. Then 
$n_{1}>3$ and 
\begin{equation*}
g_{n_{1}+2}<d_{1}\leq g_{n_{1}+1}.
\end{equation*}%
Having chosen $d_{j}$ and $n_{j}>j+2$ for $j=1, \ldots,k-1,$ we now let $%
d_{k}$ be the size of a gap at level $i>k+2,$ sufficiently small so that

\begin{equation}
2^{k}\leq \left( \frac{d_{k-1}}{d_{k}}\right) ^{s}
\label{condition0sequence}
\end{equation}%
and%
\begin{equation}
\frac{\alpha ^2}{1-2\gamma }\sum_{i=j+1}^{k}d_{i}<d_{j}\gamma ^{j}\text{ for 
}j=1, \dots,k-1.  \label{condition00}
\end{equation}

Put $n_{k}+1=\max \{i:g_{i}\geq d_{k}\}$. This choice ensures that $%
n_{k}>k+2 $, the sequence $\{n_{k}\}$ is increasing and 
\begin{equation*}
g_{n_{k}+2}<d_{k}\leq g_{n_{k}+1}\text{ for all }k\text{.}
\end{equation*}

\emph{Construction of $A_{k}$.} Having chosen $d_{k}$ and $n_{k}$ we now
proceed with the construction of $A_{k}$. We note that (\ref{condition00})
implies 
\begin{equation}
\frac{\alpha }{1-2\gamma }\sum_{i=1}^{\infty }d_{j+i}<d_{j}\gamma ^{j}\text{
for all }j  \label{condition1sequence}
\end{equation}%
and 
\begin{equation}
\frac{\alpha ^2 }{1-2\gamma }d_{k+1}<d_{k}\gamma ^{k}\text{.}
\label{condition2}
\end{equation}

\noindent \emph{Step $1$.} For each $0\leq j\leq k,$ we will choose $2^{j}$
gaps from $\{a_{i}\}$ of lengths comparable to $d_{k}\gamma ^{j}$ in length
and call these the \emph{gaps of the $j$-th level in }$A_{k}$. To make such
a selection, we choose positive integers $i_{j},$ for $j=0,...,k,$ so that 
\begin{equation*}
n_{k}+i_{j}=\max \{l:g_{l}\geq d_{k}\gamma ^{j}\}.
\end{equation*}%
This will ensure that $i_{0}=1$, $i_{j}\leq i_{j+1}$, 
\begin{equation}
g_{n_{k}+i_{j}+1}<d_{k}\gamma ^{j}\leq g_{n_{k}+i_{j}}\text{ and }%
d_{k}\gamma ^{j}\leq g_{n_{k}+i_{j}}\leq \alpha d_{k}\gamma ^{j}\text{,}
\label{gapdefn}
\end{equation}%
the latter by the doubling condition.

We define the gaps of the $j$'th level in $A_{k}$ to have length $%
g_{n_{k}+i_{j}}$. As $n_{k}>k+2$, there are sufficiently many gaps from
which to make the selection, even if the $i_j$ are not strictly increasing.
In fact, we will use at most half the available gaps at any level.

\noindent \emph{Step $2$.} We arrange these $2^{k+1}-1$ gaps to produce the
discrete set $A_{k}$ with $2^{k+1}$ points in the following manner. Consider
the full binary tree $\mathcal{T}_{k}$ up to level $k$. For each $j=0, \ldots,k$ place the $%
2^{j} $ indices of the gaps of level $j$ in the vertices of level $j$ of $%
\mathcal{T}_{k}$. Then paste the corresponding gaps from left to right using
the in-order tree traversal; see Figure \ref{FigureTree} to recall the
definition of this order when $k=2$. This will be the set $A_{k}$; see
Figure \ref{FigureA3}. Since $A_{k}$ is made with $2^{j}$ gaps of level $j$
for $0\leq j\leq k$, by (\ref{gapdefn}) we have 
\begin{equation}
d_{k}\leq g_{n_{k}+1}\leq \text{diam}\: A_{k}\leq
\sum_{j=0}^{k}2^{j}g_{n_{k}+i_{j}}\leq \alpha d_{k}\sum_{j=0}^{k}2^{j}\gamma
^{j}<\frac{\alpha }{1-2\gamma }d_{k}.  \label{diamAk}
\end{equation}

\begin{figure}[tbp]
\centering
\par
\begin{tikzpicture}[level distance=1.5cm,
  level 1/.style={sibling distance=3cm},
  level 2/.style={sibling distance=1.5cm}]
  \node {4}
    child {node {2}
      child {node {1}}
      child {node {3}}
    }
    child {node {6}
    child {node {5}}
      child {node {7}}
    };
\end{tikzpicture}
\caption{The order given by $\mathcal{T}_2$.}
\label{FigureTree}
\end{figure}
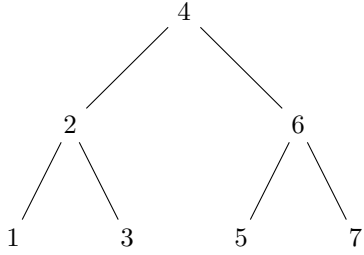

We remark that conditions (\ref{condition2}),(\ref{gapdefn}) and the
doubling property imply that 
\begin{equation*}
g_{n_{k}+i_{j}+2}\geq \frac{1}{\alpha }g_{n_{k}+i_{j}+1}\geq \frac{1}{\alpha
^2 }g_{n_{k}+i_{j}}\geq \frac{1}{\alpha ^2 }d_{k}\gamma ^{j}>d_{k+1},
\end{equation*}%
thus the gaps associated with sets $A_{k}$ and $A_{m}$, for $k\neq m$,
correspond to different gap levels in $C\{r_{j}\}$.

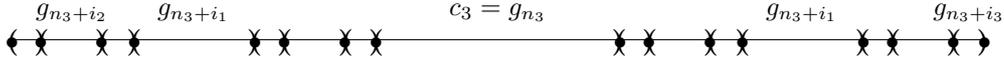
\begin{figure}[tbp]
\centering
\begin{tikzpicture}[scale=.4]
\draw (0,0.1) -- (32,0.1);
\draw(0.05,0) node{{\bf (}};
\draw(.95,0) node{{\bf )}};
\draw(1.05,0) node{{\bf (}};
\draw(2.95,0) node{{\bf )}};
\draw(3.05,0) node{{\bf (}};
\draw(4,0) node{{\bf )}};
\draw(4.1,0) node{{\bf (}};
\draw(7.95,0) node{{\bf )}};
\draw(8.05,0) node{{\bf (}};
\draw(8.95,0) node{{\bf )}};
\draw(9.05,0) node{{\bf (}};
\draw(10.95,0) node{{\bf )}};
\draw(11.05,0) node{{\bf (}};
\draw(11.95,0) node{{\bf )}};

\draw(0.025,0) node{$\bullet$};
\draw(.975,0) node{$\bullet$};
\draw(2.975,0) node{$\bullet$};
\draw(4.05,0) node{$\bullet$};

\draw(0.025+8,0) node{$\bullet$};
\draw(.975+8,0) node{$\bullet$};
\draw(2.975+8,0) node{$\bullet$};
\draw(4+8,0) node{$\bullet$};

\draw(12.05,0) node{{\bf (}};
\draw(19.95,0) node{{\bf )}};

\draw(16,1) node{$c_3=g_{n_3}$};
\draw(6,1) node{$g_{n_3+i_1}$};
\draw(26,1) node{$g_{n_3+i_1}$};
\draw(2,1) node{$g_{n_3+i_2}$};
\draw(31.5,1) node{$g_{n_3+i_3}$};

\draw(0.05+20,0) node{{\bf (}};
\draw(.95+20,0) node{{\bf )}};
\draw(1.05+20,0) node{{\bf (}};
\draw(2.95+20,0) node{{\bf )}};
\draw(3.05+20,0) node{{\bf (}};
\draw(4+20,0) node{{\bf )}};
\draw(4.1+20,0) node{{\bf (}};
\draw(7.95+20,0) node{{\bf )}};
\draw(8.05+20,0) node{{\bf (}};
\draw(8.95+20,0) node{{\bf )}};
\draw(9.05+20,0) node{{\bf (}};
\draw(10.95+20,0) node{{\bf )}};
\draw(11.05+20,0) node{{\bf (}};
\draw(11.95+20,0) node{{\bf )}};

\draw(0.025+20,0) node{$\bullet$};
\draw(.975+20,0) node{$\bullet$};
\draw(2.975+20,0) node{$\bullet$};
\draw(4.05+20,0) node{$\bullet$};

\draw(0.025+28,0) node{$\bullet$};
\draw(.975+28,0) node{$\bullet$};
\draw(2.975+28,0) node{$\bullet$};
\draw(4+28,0) node{$\bullet$};

\end{tikzpicture}
\caption{$A_3$ has $2^4$ points; the sizes of some gaps are indicated.}
\label{FigureA3}
\end{figure}

We set $\rho_{k}=d_{k}\gamma ^{k}$ and $R_{k}=\text{diam}\: A_{k}$. Both
sequences are decreasing, $\rho_{k}<R_{k}$ and by (\ref{condition1sequence})
and (\ref{diamAk}) we obtain the useful bounds 
\begin{equation}
R_{k+1}\leq \text{diam}\: \left( \bigcup_{j=k+1}^{\infty }A_{j}\right)
=\sum_{j=1}^{\infty }R_{k+j}\leq \frac{\alpha }{1-2\gamma }%
\sum_{j=1}^{\infty }d_{k+j}<d_{k}\gamma ^{k}=\rho_{k}<R_{k}.
\label{condition1sequence1}
\end{equation}

Having constructed the sets $A_{k}$, our next task is to prove that $\dim
_{A}\mathcal{A}=s$. In this part of the argument $c$ will denote a constant
that depends only upon $\alpha $ and $\gamma $.

By considering the special case $R=R_{k}$ and $r=\rho_{k}$, we first verify
that $s$ is a lower bound for the dimension. In fact, for any $y\in A_{k}$
the set $B(y,R)\cap A_{k}$ contains at least $2^{k}$ points spread $r$ apart
(one for each $k$'th level gap of $A_{k}$), hence the choice of $\gamma
=2^{-1/s}$ and (\ref{diamAk}) yields 
\begin{equation*}
N_{r}(B(y,R)\cap \mathcal{A})\geq 2^{k}=\gamma ^{-ks},
\end{equation*}%
while 
\begin{equation*}
\gamma ^{-ks}=\left( \frac{d_{k}}{d_{k}\gamma ^{k}}\right) ^{s}\leq \left( 
\frac{R_{k}}{\rho_{k}}\right) ^{s}\leq \left( \frac{\frac{\alpha }{1-2\gamma }%
d_{k}}{d_{k}\gamma ^{k}}\right) ^{s}=c\gamma ^{-ks}.
\end{equation*}%
Since the last quotient goes to infinity as $k\rightarrow \infty $, we have $%
\dim _{A}\mathcal{A}\geq s$.

To prove that $\dim _{A}\mathcal{A}\leq s$ we will show that there is a
constant $c$ such that for all $0<r<R\leq \text{diam}\: \mathcal{A}$ and $%
x\in \mathcal{A}$, $N_{r}(B(x,R)\cap \mathcal{A})\leq c(R/r)^{s}$.

Given such $r<R $, choose $m$ and $l\leq m+1$ such that 
\begin{equation*}
\rho_{m}\leq r<\rho_{m-1}\ \text{ and }\ R_{l}\leq R<R_{l-1}.
\end{equation*}%
From (\ref{condition1sequence1}) we deduce that $\bigcup_{j=m+1}^{\infty
}A_{j}$ is contained in a ball of radius $\rho_{m}\leq r,$ hence for all $x$
and $R$, 
\begin{equation}
N_{r}\left( B(x,R)\cap \bigcup_{j=m+1}^{\infty }A_{j}\right) \leq 1.
\label{ob1}
\end{equation}%
Another important observation is that all gaps in $A_{j}$ have length at
least $d_{j}\gamma ^{j}=\rho_{j}$, hence 
\begin{equation}
N_{r}\left( B(x,R)\cap \bigcup_{j=1}^{J}A_{j}\right) \leq 3 \ \ \text{ if } \ \ R\leq
\rho_{J}.  \label{ob2}
\end{equation}

We now divide the proof that $\dim _{A}\mathcal{A}\leq s$ into three cases.

\noindent \emph{Case 1:} $l<m$. Here we use (\ref{ob1}) and the observation
that for any set $F$, $N_{r}(F)\leq \#F$, to get the simple bound%
\begin{eqnarray*}
N_{r}(B(x,R)\cap \mathcal{A}) &\leq &N_{r}\left( B(x,R)\cap
\bigcup_{j=m+1}^{\infty }A_{j}\right) +N_{r}\left( B(x,R)\cap
\bigcup_{j=1}^{m}A_{j}\right) \\
&\leq &1+\sum_{j=1}^{m}\#A_{j}<8\cdot 2^{m-1}.
\end{eqnarray*}%
Furthermore, 
\begin{equation*}
\left( \frac{R}{r}\right) ^{s}\geq \left( \frac{R_{m-1}}{\rho_{m-1}}\right)
^{s}\geq \left( \frac{d_{m-1}}{d_{m-1}\gamma ^{m-1}}\right) ^{s}=2^{m-1},
\end{equation*}%
consequently, $N_{r}(B(x,R)\cap \mathcal{A})\leq 8(R/r)^{s}.$

\noindent \emph{Case 2a): $l=m$ and $R\leq \rho_{m-1}$.}

From (\ref{ob1}) and (\ref{ob2}) we deduce that $N_{r}\left( B(x,R)\cap
\bigcup_{j\neq m}A_{j}\right) \leq 4$, so it suffices to study $N_{r}\left(
B(x,R)\cap A_{m}\right) $.

If $r\geq d_{m}$, then $\text{diam}\: A_{m}=R_{m}\leq \alpha
d_{m}/(1-2\gamma )\leq cr$ and hence $N_{r}\left( B(x,R)\cap \mathcal{A}%
\right) \leq c+4$ and we are done.

Otherwise, as $r\geq \rho_{m}=d_{m}\gamma ^{m}$, there exists some $0\leq j<m$
such that 
\begin{equation*}
d_{m}\gamma ^{j+1}\leq r<d_{m}\gamma ^{j},
\end{equation*}%
and then 
\begin{equation*}
\left( \frac{R}{r}\right) ^{s}\geq \left( \frac{R_{m}}{d_{m}\gamma ^{j}}%
\right) ^{s}\geq \left( \frac{d_{m}}{d_{m}\gamma ^{j}}\right) ^{s}=2^{j}.
\end{equation*}%
The set $A_{m}$ is contained in the union of the disjoint closed intervals
bounded by consecutive gaps of $A_{m}$ of level $\leq j$. There are $2^{j+1}$
of these intervals and they each have length 
\begin{equation}
\sum_{l=1}^{m-j}2^{l-1}g_{n_{m}+i_{j+l}}\leq \sum_{l=1}^{m-j}2^{l-1}\alpha
d_{m}\gamma ^{j+l}<\frac{\alpha }{1-2\gamma }d_{m}\gamma ^{j+1}\leq cr.
\label{LengthGap}
\end{equation}%
Thus $N_{r}(B(x,R)\cap A_{m}) \leq c2^{j}$ and hence 
\begin{equation*}
N_{r}(B(x,R)\cap \mathcal{A})\leq c2^{j}\leq c\left( \frac{R}{r}\right) ^{s}.
\end{equation*}

\noindent \emph{Case 2 b): $l=m$ and $R>\rho_{m-1}.$}

As (\ref{condition1sequence1}) tells us $R_{m-1}<\rho_{m-2}$ and we are given $%
R<R_{m-1}$, from (\ref{ob2}) we deduce that 
\begin{equation*}
N_{r}\left( B(x,R)\cap \bigcup_{i\leq m-2}A_{i}\right) \leq 3.
\end{equation*}%
Thus it suffices to bound $N_{r}(B(x,R)\cap (A_{m-1}\cup A_{m}))$ by $%
c(R/r)^{s}$. Of course, $N_{r}(B(x,R)\cap (A_{m-1}\cup A_{m}))$ is dominated
by the cardinality of $A_{m-1}\cup A_{m},$ hence if $R\geq d_{m-1}$, then we
have 
\begin{equation*}
N_{r}(B(x,R)\cap \mathcal{A})\leq c2^{m}\leq c\left( \frac{d_{m-1}}{%
d_{m-1}\gamma ^{m-1}}\right) ^{s}\leq c\left( \frac{R}{r}\right) ^{s}.
\end{equation*}

So we can assume there is some $1\leq h<m$ such that $\ $%
\begin{equation*}
d_{m-1}\gamma ^{h}\leq R<d_{m-1}\gamma ^{h-1}.
\end{equation*}%
In this case, $R$ is smaller than the size of any gap in $A_{m-1}$ at level
less than $h$. It follows that $B(x,R)\cap A_{m-1}$ is contained in the
closed interval between two consecutive gaps of level $<h$ and therefore%
\begin{equation*}
\#\left( B(x,R)\cap A_{m-1}\right) \leq c2^{m-h}\text{.}
\end{equation*}%
If, also, $r\geq d_{m}$, then $\text{diam}\: A_{m}\leq cr$, so $%
N_{r}(A_{m})\leq c$. Putting these facts together we see that $%
N_{r}(B(x,R)\cap \mathcal{A})\leq C2^{m-h},$ while (\ref{condition0sequence}%
) ensures that%
\begin{equation*}
\left( \frac{R}{r}\right) ^{s}\geq \left( \frac{d_{m-1}\gamma ^{h}}{d_{m}}%
\right) ^{s}=2^{m-h}.
\end{equation*}

Otherwise, there exists $1\leq j<m$ such that 
\begin{equation*}
d_{m}\gamma ^{j+1}\leq r<d_{m}\gamma ^{j}.
\end{equation*}%
As in case 2 a), this hypothesis ensures $N_{r}(B(x,R)\cap A_{m})\leq c2^{j}$
so that 
\begin{equation*}
N_{r}(B(x,R)\cap \mathcal{A})\leq c(2^{m-h}+2^{j}),
\end{equation*}%
while $(R/r)^{s}\geq (d_{m-1}\gamma ^{h}/d_{m}\gamma ^{j})^{s}=2^{m+j-h}$.

\noindent \emph{Case 3: $l=m+1$.} Then we have $\rho_{m}\leq r<R<R_{m}<\rho_{m-1}$.

If $d_{m}\leq r,$ then $R\leq cr$, hence $N_{r}(B(x,R)\cap \mathcal{A})\leq c
$ and we are done.

So assume $d_{m}\gamma ^{j}\leq r<d_{m}\gamma ^{j-1}$ for some $1\leq j\leq
m $ and 
\begin{equation*}
d_{m}\leq R<R_{m}\ \ \ \text{or}\ \ \ d_{m}\gamma ^{h}\leq R<d_{m}\gamma
^{h-1}\text{ for some}\ 1\leq h\leq j.
\end{equation*}%
In either case, as $R<\rho_{m-1}$ inequalities (\ref{ob1}) and (\ref{ob2})
show that we need only study $N_{r}(B(x,R)\cap A_{m})$. The size of $R$
guarantees that $B(x,R)\cap A_{m}$ is contained in the interval between two
consecutive gaps of the $h$'th level in $A_{m}$ (or simply the interval
containing $A_{m}$ in the first case) and is contained in the union of the
closed subintervals bounded by gaps of level $\leq j$. From (\ref{LengthGap}%
), these subintervals have length $\leq cr$ and there are at most $2^{j+1-h}$
(with $h=0$ in the first case) of them. Hence 
\begin{equation*}
N_{r}(B(x,R)\cap A_{m})\leq c2^{j-h}=c\gamma ^{(h-j)s}\leq c\left( \frac{R}{r%
}\right) ^{s}.
\end{equation*}
This completes the proof that $\dim _{A}\mathcal{A}=s$.\medskip

To conclude the proof of the theorem we construct the set $\mathcal{B}$.
First, we establish that the sequences $a$ and $b$ are equivalent. Recall that $a_{2^k+i} = g_k$, for $0\le i < 2^k$, from the fact that the Cantor set is central. 
The choice of our construction implies that $b_j = a_{j+t_j}$ for some $0 \le t_j \le j$. Thus if $k$ is chosen with $2^k \le j < 2^{k+1}$, then either $2^k \le j+t_j < 2^{k+1}$, in which case $b_j = a_j = g_k$, or $2^{k+1} \le j + t_j < 2^{k+2}$, in which case $b_j = a_{j+t_j} = g_{k+1}$.
In either case, $1/\alpha \le b_j/a_j \le \alpha$ and so the sequences are equivalent. Now let $\mathcal{B}^{\prime }\in\mathscr C_b$ be the
complementary set corresponding to $C\{r_{j}\}\in \mathscr{C}_{a}$.
Proposition \ref{equivalentarrangement} implies that $\dim_A \mathcal{B}%
^{\prime }=\dim_A C\{r_{j}\}$. The set $\mathcal{B}$ will be a translate of $%
\mathcal{B}^{\prime }$ that lies to the right of $\mathcal{A}$. Then take $E=%
\mathcal{A}\cup\mathcal{B}\in\mathscr C_a$.
\end{proof}

\begin{remark}
We remark that this argument actually shows that given \textit{any} $s\in
(0,1)$ there is a set $\mathcal{A}$ whose complementary gaps are a subset of
those of $C\{r_{j}\}$ and which has Assouad dimension $s$.
\end{remark}

\begin{corollary}
Suppose $a$ is a decreasing, summable, doubling sequence. Then 
\begin{equation*}
\{\dim _{A}E:E\in \mathscr{C}_{a}\}=[\dim _{A}C_{a},1]
\end{equation*}%
%
%
%
%
%
%
%
\end{corollary}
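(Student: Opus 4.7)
The plan is to combine the two extremal bounds already established in this section with a transfer to the central Cantor case via equivalent sequences. First I would establish the containment $\{\dim_A E : E \in \mathscr{C}_a\} \subseteq [\dim_A C_a, 1]$. The lower bound is immediate from Theorem \ref{LowerBdDec}. For the upper bound, Corollary \ref{coromax} gives $\dim_A E \le \dim_A D_a$, and I would argue that doubling forces $\dim_A D_a = 1$: if $a$ is doubling with constant $\tau$, then for every $j$ one has $\sum_{i>j} a_i \ge \sum_{i=j+1}^{2j} a_i \ge j \cdot a_{2j} \ge (j/\tau) a_j$, so no $\varepsilon > 0$ can satisfy $a_j \ge \varepsilon \sum_{i>j} a_i$ for all $j$; by Proposition \ref{01fordecreasing} this gives $\dim_A D_a = 1$.

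For the reverse inclusion $[\dim_A C_a, 1] \subseteq \{\dim_A E : E \in \mathscr{C}_a\}$, I would normalize so that $\sum a_j = 1$ and invoke the discussion immediately preceding Subsection 2.3: the doubling sequence $a$ is equivalent (in the sense of Subsection 2.2) to the block-indexed gap sequence $a'$ of the central Cantor set $C\{r_j\}$ determined by (\ref{centralCantor}). Proposition \ref{equivalentarrangement} then delivers a bijection $\Pi : \mathscr{C}_a \to \mathscr{C}_{a'}$ between corresponding complementary sets that preserves Assouad dimension, and in particular $\dim_A C_a = \dim_A C\{r_j\}$. Since equivalence preserves doubling, $a'$ is again doubling, so Theorem \ref{central doubling interval} applies to $C\{r_j\}$ and produces, for every $s \in [\dim_A C\{r_j\}, 1]$, a complementary set $F \in \mathscr{C}_{a'}$ with $\dim_A F = s$. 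Pulling $F$ back via $\Pi^{-1}$ yields the desired $E \in \mathscr{C}_a$ with $\dim_A E = s$.

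The only mildly delicate step is checking the term-by-term equivalence $a \sim a'$. Within each dyadic block $2^n \le i < 2^{n+1}$, by (\ref{centralCantor}) the sequence $a'$ is constant and equal to the average $2^{-n}(a_{2^n}+\cdots+a_{2^{n+1}-1})$, and the doubling of $a$ ensures each of $a_{2^n},\ldots,a_{2^{n+1}-1}$ is within a uniform constant of that average. With this in hand the transfer is routine, and the substantive content of the corollary is entirely supplied by Theorem \ref{central doubling interval}; the role of the corollary itself is to extract that content in the appropriate generality.
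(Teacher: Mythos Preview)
Your proposal is correct and follows essentially the same route as the paper: pass to the equivalent central Cantor gap sequence via (\ref{centralCantor}), invoke Theorem~\ref{central doubling interval} there, and pull back through Proposition~\ref{equivalentarrangement}, with Theorem~\ref{LowerBdDec} supplying the lower endpoint. Your additional verification that doubling forces $\dim_A D_a = 1$ is correct but not strictly needed, since $\dim_A E \le 1$ holds trivially for any $E \subset \mathbb{R}$; the paper simply omits this and relies on the trivial upper bound.
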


\begin{proof}
As $a$ is doubling, the sequence of gaps of the central Cantor set $%
C\{r_{j}\}$ with ratios $r_{k}$ given by (\ref{centralCantor}) is equivalent
to $a$. In particular, $C\{r_j\}$ is doubling and $\dim _{A}C\{r_{j}\}=\dim
_{A}C_{a}$. Thus given any $s\in \lbrack \dim _{A}C_{a},1]$ the theorem
implies there is a rearrangement $E^{\prime }$ of the gaps of $C\{r_{j}\}$
that has Assouad dimension $s$. The corresponding rearrangement $E\in %
\mathscr{C}_{a}$ also has dimension $s$ by Proposition \ref%
{equivalentarrangement}.

The fact that $\dim_A C_a$ is a lower bound on the set of attainable values
of the Assouad dimension is the content of Theorem \ref{LowerBdDec}.
\end{proof}


\section{Lower Assouad dimensions of complementary sets}

In this section we will prove that $C_a$ has the maximal Lower Assouad
dimension among all the complementary sets in $\mathscr{C}_a$ and that under
a mild technical assumption, the set of attainable Lower Assouad dimensions
from the class $\mathscr{C}_a $ is the interval $[0,\dim_L C_a ]$.

\label{proofs lower dimension}

\begin{theorem}
\label{upper bound lower dim} If $a$ is any summable, decreasing sequence,
then $\dim _{L}E\leq \dim _{L}C_a$ for any complementary set $E\in %
\mathscr{C}_{a}$.
\end{theorem}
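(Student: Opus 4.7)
My plan is to proceed by contradiction, dualizing the strategy of Theorem \ref{LowerBdDec}. Suppose $\dim_L E > \dim_L C_a =: \gamma$ and pick $\gamma < \alpha < \beta < \dim_L E$, yielding constants $c_0, \rho_0 > 0$ so that $N_r(B(x, R) \cap E) \ge c_0 (R/r)^\beta$ for all $x \in E$ and $0 < r < R < \rho_0$. Using (\ref{LowerDimAlt}), I can choose arbitrarily large $n$ with an index $k = k(n)$ satisfying $2^n < (s_k/s_{k+n})^\alpha$, so that $s_k/s_{k+n}$ is much larger than $2^n$. I will show this is incompatible with the above Lower Assouad lower bound.

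For the generic case I would assume $\gamma > 0$, which by Corollary \ref{CoroDimLzero}(i) is equivalent to $\inf_k s_{k+1}/s_k > 0$, and I would mirror the Assouad-case argument. Remove the $2^k - 1$ largest gaps from $[0,L]$ to obtain the level-$k$ intervals $J_1,\ldots,J_{M_k}$ of $E$, with $\sum_i |J_i| = 2^k s_k$ and $M_k \leq 2^k$; refine by removing gaps up to level $k+n$ and let $p_i$ denote the number of resulting level-$(k+n)$ subintervals contained in $J_i$, so that $\sum_i p_i \leq 2^{k+n}$. Applying the assumed Lower Assouad bound inside each sufficiently long $J_i$, with a ball of radius $\sim |J_i|/3$ centred at a point of $E \cap J_i$ at scale $r \sim s_{k+n}$, yields $p_i \gtrsim (|J_i|/s_{k+n})^\beta$. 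Averaging produces some $J^*$ with $|J^*| \geq s_k$, hence $p^* \gtrsim (s_k/s_{k+n})^\beta \geq 2^{n\beta/\alpha}$; combined with $p^* \leq 2^{k+n}$ and $\beta > \alpha$ this gives $2^{n(\beta/\alpha - 1)} \lesssim 2^k$, contradicting $n \to \infty$ at least when $k$ stays bounded.

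The remaining case $\gamma = 0$ is handled separately. By Corollary \ref{CoroDimLzero}(i), $\inf_k s_{k+1}/s_k = 0$, so along a subsequence $k_j$ the relation $s_{k_j} - 2 s_{k_j+1} = 2^{-k_j}(a_{2^{k_j}} + \cdots + a_{2^{k_j+1}-1})$ forces $a_{2^{k_j}}$ to be comparable to $s_{k_j}$. Thus in $E$ the $2^{k_j}$-th largest gap, placed inside some level-$k_j$ interval of length $\sim s_{k_j}$, consumes most of that interval. Placing $x \in E$ adjacent to this gap, the ball $B(x, c s_{k_j}) \cap E$ is trapped on one side of the gap and is covered by $O(1)$ balls of radius $s_{k_j+1}/2$; since $s_{k_j}/s_{k_j+1} \to \infty$, this forces $\dim_L E = 0$, contradicting $\dim_L E > 0$.

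The main obstacle I anticipate is the sub-regime where the forced $k(n)$ grows with $n$, since the single-$J^*$ pigeonhole weakens, and lower-bounding $\sum_i |J_i|^\beta$ by Jensen goes the wrong way for $\beta < 1$ (the concave inequality $\sum x_i^\beta \leq M_k^{1-\beta}(\sum x_i)^\beta$ is an upper bound). The natural fix is to iterate the pigeonhole inside $J^*$: the $p^*$ level-$(k+n)$ subintervals of $J^*$ are themselves split at even finer levels, and the Lower Assouad hypothesis applies again, giving nested lower bounds that should aggregate to the desired contradiction. A Hölder-style estimate across multiple $J_i$'s driven by individual large intervals rather than the full power mean should equivalently suffice.
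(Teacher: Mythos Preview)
Your proposal contains a genuine gap in both the $\gamma>0$ and $\gamma=0$ cases, stemming from the same issue: you cannot pass from the Lower Assouad lower bound on $N_r$ to information about a \emph{single} interval $J_i$.

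In the $\gamma>0$ case, your key step asserts $p_i \gtrsim (|J_i|/s_{k+n})^\beta$ from $N_r(B(x,|J_i|/3)\cap E)\ge c_0(|J_i|/s_{k+n})^\beta$. But covering $E\cap J_i$ by $r$-balls via the level-$(k+n)$ subintervals $I\subset J_i$ gives only
\[
N_r(B\cap E)\ \le\ \sum_{I\subset J_i}\Bigl(1+\tfrac{|I|}{2r}\Bigr)\ \le\ p_i + \frac{|J_i|}{2s_{k+n}},
\]
and since $\beta<1$ the linear term $|J_i|/s_{k+n}$ dominates $(|J_i|/s_{k+n})^\beta$. Thus no lower bound on $p_i$ follows. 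Nor can you go the other way and argue that each $r$-ball meets $O(1)$ level-$(k+n)$ intervals: the smallest separating gap is $a_{2^{k+n}-1}$, and nothing in the hypotheses forces this to be $\gtrsim s_{k+n}$. The same defect undercuts your $\gamma=0$ argument: knowing that $a_{2^{k_j}}\gtrsim s_{k_j}$ only tells you $B(x,cs_{k_j})\cap E$ lies to one side of that large gap, but that one-sided piece of $E$ can still require $\sim s_{k_j}/s_{k_j+1}\to\infty$ balls of radius $s_{k_j+1}/2$ to cover, so the claimed $O(1)$ cover is unjustified. You correctly flag the further difficulty that $k(n)$ may be unbounded, but the proposed iteration does not repair the first gap.

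The paper's argument is organised precisely to avoid this obstruction. First, a short lemma arranges (when $\gamma>0$) that the witnessing index $k$ can always be taken with $s_{k+1}/s_k\le\lambda<1/2$, so that $R=2\delta s_k$ with $\delta=1/2-\lambda$ is at most every gap of level $\le k-1$; the $\gamma=0$ case uses $\inf s_{k+1}/s_k=0$ directly with $m=1$. Then, rather than working inside one $J^*$, the paper takes the $2^{k}-1$ disjoint balls $B(x_j,R)$ centred at endpoints of the level-$(k-1)$ gaps and sums:
\[
\sum_{j} N_r(B(x_j,R)\cap E)\ \le\ \sum_i\Bigl(1+\tfrac{|I_i|}{r}\Bigr)\ \le\ 2^{k+m+1}+\frac{2^{k+m}s_{k+m}}{s_{k+m}}\ \le\ c\,2^{k+m},
\]
where $\{I_i\}$ are \emph{all} the level-$(k+m)$ intervals. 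The global length identity $\sum_i|I_i|=2^{k+m}s_{k+m}$ is what absorbs the linear term that defeats your single-interval estimate. Pigeonholing then yields some $j$ with $N_r(B(x_j,R)\cap E)\le c\,2^m\le c(R/r)^{\alpha+\epsilon}$, and since $R$ can be made small while $R/r\to\infty$, $\dim_L E\le\alpha$.
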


We begin with a technical lemma.

\begin{lemma}
Suppose $\alpha =\dim _{L}C_{a}\in (0,1)$. Then for any $0<\epsilon
<1-\alpha $ there exist $\lambda <1/2$ and arbitrarily large $k$ and $m$
such that $s_{k+1}/s_{k}\leq \lambda $ and 
\begin{equation*}
2^{m}<\left( \frac{s_{k}}{s_{k+m}}\right) ^{\alpha +\epsilon }.
\end{equation*}
\end{lemma}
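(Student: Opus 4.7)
The plan is to convert both conditions into statements about $t_k := \log(s_k/s_{k+1})$ (which satisfies $t_k \ge \log 2$ for all $k$ by the definition of $s_k$), and then extract suitable $k, m$ by a random-walk argument on the excess $t_k - \theta$. Condition $s_{k+1}/s_k \le \lambda$ becomes $t_k \ge -\log\lambda$, and condition $2^m < (s_k/s_{k+m})^{\alpha+\epsilon}$ becomes $\sum_{i=k}^{k+m-1} t_i > m\log 2/(\alpha+\epsilon)$. Since $\alpha > 0$, Corollary \ref{CoroDimLzero}(i) ensures $T := \sup_k t_k < \infty$. I fix an auxiliary exponent $\alpha' \in (\alpha, \alpha+\epsilon)$, set $\theta := \log 2/(\alpha+\epsilon)$ and $\theta' := \log 2/\alpha'$, and choose $\lambda := 2^{-1/(\alpha+\epsilon)} = e^{-\theta}$; the inequalities $\alpha+\epsilon < 1$ and $\alpha' < \alpha+\epsilon$ give $\log 2 < \theta < \theta'$ and $\lambda < 1/2$. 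Note also that $T > \theta$: otherwise $t_k \le \theta$ for all $k$ would force $n\log 2/\log(s_k/s_{k+n}) \ge \alpha+\epsilon$ uniformly, contradicting $\dim_L C_a = \alpha$.

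To produce $k, m$ exceeding prescribed targets $K_0, M_0$, I apply formula (\ref{formula lower}) at $\beta = \alpha'$: since $\alpha' > \alpha$ fails to be a valid exponent in the supremum, for any $n_0$ there exist $K_1 \ge K_0$ and $N_1 \ge n_0$ with $\sum_{i=K_1}^{K_1+N_1-1} t_i > N_1 \theta'$. I take $n_0 := \lceil M_0(T-\theta)/(\theta'-\theta)\rceil$. Introduce the partial-sum walk $S_n := \sum_{i=K_1}^n (t_i-\theta)$ with $S_{K_1-1} := 0$, so that $S_{K_1+N_1-1} > N_1(\theta'-\theta) > 0$. Let $j$ be the largest index in $\{K_1-1, \ldots, K_1+N_1-1\}$ at which $S$ attains its minimum; since $S_{K_1+N_1-1}$ is strictly positive while $\min S \le S_{K_1-1} = 0$, this $j$ is strictly less than $K_1+N_1-1$. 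Set $k := j+1$ and $m := K_1+N_1-k$.

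By the choice of $j$ as the \emph{last} minimum, $S_k > S_j$, i.e.\ $t_k > \theta$, which yields condition (1). Moreover $\sum_{i=k}^{k+m-1}(t_i-\theta) = S_{K_1+N_1-1} - S_j > N_1(\theta'-\theta)$ since $S_j \le 0$, giving condition (2). Finally, each of the $m$ summands $t_i-\theta$ on the left is at most $T-\theta$, so $m(T-\theta) \ge N_1(\theta'-\theta) \ge M_0(T-\theta)$, i.e.\ $m \ge M_0$; and of course $k \ge K_1 \ge K_0$. The main obstacle I anticipate is arranging for $k$ \emph{and} $m$ to \emph{both} be arbitrarily large: the lower-dimension formula directly supplies arbitrarily large $K_1, N_1$ but gives no a priori control on where in $[K_1, K_1+N_1-1]$ a starting index with $t_k > \theta$ lies. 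The last-minimum trick pins $k$ to a position where the subsequent partial sum grows by at least $N_1(\theta'-\theta)$, and the finite bound $T$ (available precisely because $\alpha > 0$) then converts this excess into the required lower bound on $m$.
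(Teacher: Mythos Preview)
Your proof is correct, but it follows a genuinely different route from the paper's. The paper argues more directly: it first obtains (from the failure of $\alpha+\epsilon$ in formula~(\ref{formula lower})) arbitrarily large $K,M$ with $2^{M}<(s_{K}/s_{K+M})^{\alpha+\epsilon}$, picks $\lambda<1/2$ and $b<1$ with $\lambda^{\alpha+\epsilon}b>1/2$, and then, if $s_{K+1}/s_{K}>\lambda$, simply skips the maximal initial run of indices where the one-step ratio exceeds $\lambda$; the first index $k=K+H$ after that run automatically has $s_{k+1}/s_{k}\le\lambda$, and an elementary inequality using $t=\inf_{j}s_{j+1}/s_{j}>0$ forces $m=M-H$ to be unbounded. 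Your approach instead recasts the problem as a fluctuation statement for the walk $S_{n}=\sum(t_{i}-\theta)$, locating $k$ as one past the \emph{last} minimum of $S$ and extracting the lower bound on $m$ from the excess $N_{1}(\theta'-\theta)$ together with the increment bound $t_{i}-\theta\le T-\theta$. The two devices---the paper's auxiliary $b<1$ and your intermediate exponent $\alpha'\in(\alpha,\alpha+\epsilon)$---play analogous roles in creating the slack needed to push $m$ to infinity. Your argument is a bit more structural and yields the specific value $\lambda=2^{-1/(\alpha+\epsilon)}$, which is smaller than the paper's $\lambda$ (the paper needs $\lambda>2^{-1/(\alpha+\epsilon)}$), so your conclusion is nominally sharper; the paper's argument is shorter and more hands-on.
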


\begin{proof}
Since $\alpha >0$, we have $t:=\inf s_{j+1}/s_{j}>0$ and from formula (\ref%
{formula lower}) we deduce that for arbitrarily large $M$ and $K$, 
\begin{equation*}
2^{M}<\left( \frac{s_{K}}{s_{K+M}}\right) ^{\alpha +\epsilon }. 
\end{equation*}%
Pick $\lambda <1/2$ and $b<1$ so that $\lambda ^{\alpha +\epsilon }b>1/2$.
If $s_{K+1}/s_{K}\leq \lambda $ we are done. So assume otherwise and let $%
H=\max \{1\leq i\leq M:s_{K+j}/s_{K+j-1}>\lambda $ for $j=1, \ldots, i\}$.
Then 
\begin{equation}
2^{M}<\left( \frac{s_{K}}{s_{K+M}}\right) ^{\alpha +\epsilon
}<2^{H}b^{H}\left( \frac{s_{K+H}}{s_{K+M}}\right) ^{\alpha +\epsilon }\leq 
\frac{2^{H}b^{H}}{t^{(M-H)(\alpha +\epsilon )}},  \label{thisothereq}
\end{equation}
so 
\begin{equation*}
\left( 2t^{\alpha +\epsilon }\right) ^{M-H}\leq b^{H}\text{.}
\end{equation*}%
This proves that not only is $H<M,$ but also that $\sup_{M}(M-H)=\infty $.
Letting $k=K+H$ and $m=M-H$, we obtain by (\ref{thisothereq}) that 
\begin{equation*}
2^{m}=2^{M-H}<b^{H}\left( \frac{s_{k}}{s_{k+m}}\right) ^{\alpha +\epsilon }.
\end{equation*}%
Since $s_{k+1}/s_{k}\leq \lambda $ by the definition of $H$ and $b<1$, the
lemma holds.
\end{proof}

\begin{proof}[Proof of Theorem \protect\ref{upper bound lower dim}]
We assume that $\alpha =\dim _{L}C_a<1$ for otherwise there is nothing to
prove. Let $\epsilon >0$ be such that $\alpha +\epsilon <1$. If $\alpha =0$
then, as noted in Corollary \ref{CoroDimLzero}, $\inf s_{j+1}/s_{j}=0$, so there
are arbitrarily large $k$ such that 
\begin{equation}
2<\left( \frac{s_{k}}{s_{k+1}}\right) ^{\epsilon }.  \label{zerodim}
\end{equation}%
Otherwise, $\alpha >0$ and by the previous lemma there is a constant $%
\lambda <1/2$ such that we can always find arbitrarily large $k$ and $m$
such that $s_{k+1}/s_{k}\leq \lambda $ and 
\begin{equation*}
2^{m}<\left( \frac{s_{k}}{s_{k+m}}\right) ^{\alpha +\epsilon }.
\end{equation*}

Put $R=2\delta s_{k}$ where $\delta =1/2-(1/2)^{1/\epsilon }$ in the first
case and $\delta =1/2-\lambda $ in the second; $R$ is at most the length of
any gap of level $\leq k-1$. If we let $r=s_{k+m}$ where $m=1$ in the first
case, then 
\begin{equation*}
\left( \frac{R}{r}\right) ^{\alpha +\epsilon }=\left( \frac{2\delta s_{k}}{%
s_{k+m}}\right) ^{\alpha +\epsilon }>c2^{m}
\end{equation*}%
for some constant $c>0$. Note that we can arrange for $R$ to be arbitrarily
small and $R/r$ to be arbitrarily large.

Consider the sets $B(x_{j},R)\cap E$ where $x_{j}$ are the left endpoints of
the gaps of level $k-1$ in $E$. There are $2^{k}-1$ such sets and the choice
of $R$ ensures they are disjoint. Each set $B(x_{j},R)\cap E$ is a subset of
the union of the closed intervals which lie between consecutive gaps of
levels $1, \ldots,k+m$. The total length of these intervals, summed over all $j$%
, is $2^{k+m}s_{k+m}$ and there are at most $2^{k+m+1}$ of them. Each of
these intervals, $I_{i},$ can be covered by $1+\left( \left\vert
I_{i}\right\vert /r\right) $ balls of radius $r$, hence%
\begin{eqnarray*}
\sum_{j=1}^{2^{k}-1}N_{r}(B(x_{j},R)\cap E) &\leq &\sum_{i}\left( 1+\frac{%
\left\vert I_{i}\right\vert }{r}\right) \\
&\leq &\frac{1}{r}\sum_{i}\left\vert I_{i}\right\vert +2^{k+m+1}\leq
c2^{k+m}.
\end{eqnarray*}%
Consequently, there is some index $j$ such that 
\begin{equation*}
N_{r}(B(x_{j},R)\cap E)\leq c2^{m} \leq c\left(\frac{R}{r}\right) ^{\alpha
+\epsilon }.
\end{equation*}%
It
follows that $\dim _{L}E\leq \alpha $.
\end{proof}


\begin{theorem}
Suppose $\sup r_{j}<1/2$. Then given any $\alpha <\dim _{L}C\{r_{j}\}$ there
is some complementary set $E$ of the gap sequence of $C\{r_{j}\}$ such that $%
\dim _{L}E=\alpha $.
\end{theorem}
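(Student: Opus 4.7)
The plan is to construct $E = \mathcal{A} \cup \mathcal{B}$ as a union of two positively separated pieces inside $[0,L]$, where $\mathcal{A}$ uses a carefully chosen subset of the gaps $\{a_j\}$ of $C\{r_j\}$ and has $\dim_L \mathcal{A} = \alpha$, $\mathcal{B}$ uses (essentially) all the remaining gaps and has $\dim_L \mathcal{B} = \dim_L C\{r_j\}$, and the interval separating $\mathcal{A}$ from $\mathcal{B}$ is itself one of the $a_j$'s (say $a_1$, the largest). Since $\mathcal{A}$ and $\mathcal{B}$ are bounded away from each other, any ball centred in one piece of radius below the separation distance meets only that piece, so the standard union rule $\dim_L(A \sqcup B) = \min(\dim_L A, \dim_L B)$ for positively separated sets gives $\dim_L E = \alpha$. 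The matching upper bound $\dim_L E \leq \dim_L C\{r_j\}$ follows from Theorem \ref{upper bound lower dim}.

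To build $\mathcal{A}$, pick $\gamma \in (0, 1/2)$ with $\gamma^\alpha = 1/2$, so that a central Cantor set of constant ratio $\gamma$ has Lower Assouad dimension $\alpha$. Because $\sup r_j < 1/2$, the gap lengths $g_k = r_1 \cdots r_{k-1}(1 - 2r_k)$ decay geometrically at a definite rate, so for each integer $k \geq 1$ there is a level $l_k$ of the original construction with $g_{l_k}$ comparable to $\gamma^{k-1}$. Choose a strictly increasing sequence $l_1 < l_2 < \cdots$ with $l_k - k \to \infty$, and at each $l_k$ select $2^{k-1}$ of the available $2^{l_k - 1}$ gaps of length $g_{l_k}$. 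Arrange the chosen gaps in the standard central-Cantor layout; the resulting set $\mathcal{A}$ has an ordered gap sequence equivalent to that of a true central Cantor set of ratio $\gamma$, so Proposition \ref{equivalentarrangement} gives $\dim_L \mathcal{A} = \alpha$.

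For $\mathcal{B}$, let $b$ be the leftover sequence (the entries of $a$, other than $a_1$, not used by $\mathcal{A}$). Because $l_k - k \to \infty$, the number of gaps removed at any level is a vanishing fraction of those available, so $b_n \asymp a_n$ and the two sequences are equivalent. Define $\mathcal{B}$ to be the complementary set of $b$ corresponding, under the bijection of Proposition \ref{equivalentarrangement}, to $C\{r_j\}$; then $\mathcal{B}$ is bi-Lipschitz equivalent to $C\{r_j\}$ and so $\dim_L \mathcal{B} = \dim_L C\{r_j\} > \alpha$. Finally, place $\mathcal{A}$ and $\mathcal{B}$ inside $[0,L]$ so that they are separated by exactly one gap of length $a_1$; this produces a closed set $E \in \mathscr{C}_a$ with the required properties.

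The principal obstacle is the joint bookkeeping for $\mathcal{A}$ and $\mathcal{B}$: one must simultaneously (i) select enough gaps to make $\mathcal{A}$ genuinely Cantor-like with the prescribed Lower Assouad dimension $\alpha$, and (ii) leave behind a sequence $b$ equivalent to $a$ so that $\mathcal{B}$ inherits the full Lower Assouad dimension of $C\{r_j\}$. The hypothesis $\sup r_j < 1/2$ is precisely what makes both goals simultaneously achievable — it provides the uniform geometric control on gap sizes needed to locate gaps of any required size in $\{a_j\}$ and still use only a negligible fraction of them at each level.
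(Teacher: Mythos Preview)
Your approach is essentially the paper's: split $E$ into two positively separated pieces, one a central-Cantor-like set of Lower Assouad dimension $\alpha$ built from a sparse selection of the gaps, the other the set corresponding to $C\{r_j\}$ under the (equivalent) leftover gap sequence, and then apply the rule $\dim_L(A\sqcup B)=\min(\dim_L A,\dim_L B)$ for positively separated sets. Two small points to tighten: you must treat $\alpha=0$ separately (no $\gamma\in(0,1/2)$ satisfies $\gamma^{0}=1/2$; simply take any countable rearrangement), and your assertion that one can choose strictly increasing $l_k$ with $l_k-k\to\infty$ is correct but needs the extra input that $\alpha<\dim_L C\{r_j\}$ forces $s_n\geq c(\gamma')^{n}$ for some $\gamma'>\gamma$, whence $l_k/k\to c>1$ --- the paper sidesteps this by requiring only $k_j>j$ (so at most half the gaps at each level are used) and then invoking the equivalence bookkeeping from the end of the proof of Theorem~\ref{central doubling interval}.
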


\begin{proof}
If $\alpha =0$, we simply let $E$ be any countable rearrangement and then $%
\dim _{L}E=0$.

So assume $\alpha >0$. Then $\dim _{L}C\{r_{j}\}>0$, hence $\inf r_{j}>0$
and so there is some $\varepsilon >0$ such that $\varepsilon
<r_{j}<1/2-\varepsilon $ for all $j$.

Choose $d<1/2$ such that $\alpha =\log 2/\left\vert \log d\right\vert $. As $%
\alpha <\dim _{L}C\{r_{j}\}$, formula (\ref{LowerDimAlt}) shows that there
is some $N$ such that for all $n\geq N$ and for all $k$,%
\begin{equation*}
\frac{\log 2}{\left\vert \log d\right\vert }<\frac{n\log 2}{\left\vert \log
\left( r_{k+1}\cdot \cdot \cdot r_{n}\right) \right\vert }.
\end{equation*}%
Taking $k=0$, we see that $s_{n}=r_{1}\cdot \cdot \cdot r_{n}>d^{n}$.

For each $j\geq N$, choose $k_{j}$ such that 
\begin{equation*}
s_{k_{j}}\leq d^{j}<s_{k_{j}-1}\text{.}
\end{equation*}%
As $r_{k_{j}}>\varepsilon $, we have $s_{k_{j}-1}\geq s_{k_{j}}\geq
\varepsilon s_{k_{j}-1}$, so $d^{j}\sim s_{k_{j}}$. The sequence $\{k_{j}\}$
is increasing and the fact that $s_{n}>d^{n}$ implies $k_{j}>j$. In
particular, if $k_{j}=\cdot \cdot \cdot =k_{j+m}$, then $k_{j}>j+m$ and $m$
is bounded.

We will form a Cantor set $C_{b}$ associated to a subsequence $b$ of the gap
sequence $a$ of $C\{r_{j}\}$. Indeed, this subsequence will be chosen in
such a way that the lengths of the $2^{j-N}$ gaps at step $j-N+1$ in the
construction of $C_{b}$ will be the lengths of the gaps of step $k_{j}$ of
the construction of $C\{r_{j}\}$. ($C_{b}$ will also be a central Cantor
set, but of diameter less than $1$.) We note that the property that $k_{j}>j$
ensures that we will use at most half the available gaps from each step $%
k_{j}$.

The gaps of step $j$ in $C\{r_{i}\}$ have length comparable to $r_{1}\cdot
\cdot \cdot r_{j}$, thus the gaps of step $j-N+1$ in $C_{b}$ will be
comparable in size to $s_{k_{j}}$ and hence to $d^{j}$. Therefore 
\begin{eqnarray*}
s_{n}^{(b)} &=&\frac{1}{2^{n}}\sum_{k=2^{n}}^{\infty }b_{k}=\frac{1}{2^{n}}%
\sum_{i=n+1}^{\infty }2^{i-1}\cdot \text{ length of gaps of step }i \\
&\sim &\frac{1}{2^{n}}\sum_{i=n+1}^{\infty }2^{i}d^{i+N}\sim d^{n}.
\end{eqnarray*}%
Consequently, $s_{k+n}^{(b)}/s_{k}^{(b)}\sim d^{n}$ and an application of
formula (\ref{LowerDimAlt}) shows that $\dim _{L}C_{b}=\log 2/\left\vert
\log d\right\vert =\alpha $.

Now let $b^{\prime }$ be the sequence consisting of the remaining terms of
the gap sequence $a$ of $C\{r_{j}\}$. The same arguments as at the
conclusion of the proof of Theorem \ref{central doubling interval} show that 
$b^{\prime }$ is equivalent to $a$. Hence, the complementary set $A\in%
\mathscr C_{b^{\prime }}$ corresponding to $C\{r_{j}\}\in\mathscr C_a$ and $%
C\{r_{j}\}$ have the same Lower Assouad dimension. Define the set $E$ to be
the union of $C_{b}$ and a translation of $A$ that lies strictly to the
right of $C_{b}$. Since the Lower Assouad dimension of the union of two sets
with positive separation is the minimum of the Lower Assouad dimensions of
the two sets, $\dim _{L}E=\alpha $ (see \cite{Fraser}).
\end{proof}

\begin{rem}
The assumption that $\sup r_{j}<1/2$ is not a necessary condition for the
conclusion of the Theorem. It can be shown, for example, that if $%
r_{j}=1/2-1/(2j)$, then the same property holds for $C\{r_{j}\}$. We omit
the details.
\end{rem}

\begin{corollary}
Suppose $a$ is a decreasing, summable sequence and that $\sup
s_{j}/s_{j-1}<1/2 $. Then 
\begin{equation*}
\{\dim_L E : E \in \mathscr{C}_a\} = [0, \dim_L C_a].
\end{equation*}
\end{corollary}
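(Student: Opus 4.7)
The upper bound $\{\dim _{L}E:E\in \mathscr{C}_{a}\}\subseteq [0,\dim _{L}C_{a}]$ is Theorem \ref{upper bound lower dim}, and the value $0$ is attained by the countable rearrangement $D_{a}$, which has isolated points. If $\dim _{L}C_{a}=0$ the claim is immediate, so henceforth assume $\dim _{L}C_{a}>0$; by Corollary \ref{CoroDimLzero}(i) this forces $\eta :=\inf s_{j}/s_{j-1}>0$, while the hypothesis gives $\lambda :=\sup s_{j}/s_{j-1}<1/2$.

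For $\alpha \in (0,\dim _{L}C_{a}]$, my plan is to mimic the Assouad-dimension corollary by passing to the central Cantor set $C\{r_{j}\}$ with ratios $r_{j}:=s_{j}/s_{j-1}$. Then $\sup r_{j}=\lambda <1/2$, and since $s_{n}^{(a)}=r_{1}\cdots r_{n}$, formula (\ref{LowerDimAlt}) gives $\dim _{L}C\{r_{j}\}=\dim _{L}C_{a}$. The preceding Theorem produces a complementary set $F$ of the gap sequence $a^{\prime }$ of $C\{r_{j}\}$ with $\dim _{L}F=\alpha $. Transferring $F$ via Proposition \ref{equivalentarrangement} will yield a set $E\in \mathscr{C}_{a}$ with $\dim _{L}E=\alpha $, provided $a\sim a^{\prime }$.

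The principal obstacle is to verify this equivalence. Since $a^{\prime }$ is constant on each dyadic block $[2^{j},2^{j+1})$ with common value $s_{j}-2s_{j+1}$, the equivalence $a\sim a^{\prime }$ reduces to showing $a$ is doubling. Here I would exploit the two-sided bound $\eta \leq s_{j}/s_{j-1}\leq \lambda $ together with the block-sum identity $\sum_{k=2^{j}}^{2^{j+1}-1}a_{k}=2^{j}(s_{j}-2s_{j+1})$. By monotonicity the leading term $a_{2^{j}}$ of block $j$ is dominated by $a_{2^{j}-1}$, which as the smallest term of the previous block is at most the block average $s_{j-1}-2s_{j}\leq s_{j-1}$; dually the trailing term satisfies $a_{2^{j+1}-1}\geq a_{2^{j+1}}\geq s_{j+1}-2s_{j+2}\geq (1-2\lambda )s_{j+1}$, because the leading term of a decreasing block is at least its average. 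Chaining through $s_{j-1}\leq s_{j}/\eta $ yields a uniform doubling constant for $a$, so $a\sim a^{\prime }$ and Proposition \ref{equivalentarrangement} completes the proof.
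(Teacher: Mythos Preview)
Your proof is correct and follows essentially the same route as the paper: reduce to the central Cantor set $C\{r_j\}$ with $r_j = s_j/s_{j-1}$, invoke the preceding Theorem, and transfer back via Proposition \ref{equivalentarrangement} after verifying that $a$ is doubling (which the paper leaves as a ``routine exercise'' and you spell out). One small slip: the preceding Theorem only produces $F$ with $\dim_L F=\alpha$ for $\alpha<\dim_L C\{r_j\}$, not $\alpha\le\dim_L C\{r_j\}$; the endpoint $\alpha=\dim_L C_a$ is attained directly by $C_a\in\mathscr{C}_a$.
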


\begin{proof}
Of course, $\dim_L D_a =0$ and the fact that $\dim_L C_a$ is an upper bound 
on the attainable dimensions is the content of Theorem \ref{upper bound
lower dim}.

So assume  $0 < \alpha < \dim_L C_a$. Then it must be the case that $\inf
s_{j}/s_{j-1}>0$ and it is a routine exercise to see that this fact,
combined with the assumption that $\sup s_{j}/s_{j-1}<1/2,$ implies $a$ is a
doubling sequence. Consequently, $a$ is equivalent to the gap sequence of a
central Cantor set with ratios $\{r_{j}\}$ as defined in (\ref{centralCantor}%
). Since $r_{1}\cdot \cdot \cdot r_{j}=s_{j}$ it is immediate that $%
C\{r_{j}\}$ satisfies the hypothesis of the proposition. The corollary
follows since Proposition \ref{equivalentarrangement} implies the Lower
Assouad dimensions of the sets in $\mathscr C_a$ have the same dimensions as
the corresponding complementary sets of the gap sequence of $C\{r_{j}\}$.
\end{proof}

%
%
%
%
%

We conclude with an application.

\begin{corollary}
Suppose there is some $E\in $ $\mathscr{C}_{a}$ such that $\dim _{L}E=\dim
_{A}E=s$. Then, also, $\dim _{L}C_{a}=\dim _{A}C_{a}=\dim _{H}C_{a}=s$.
\end{corollary}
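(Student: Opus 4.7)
The proof should be a short synthesis of results already established in the paper. The plan is to combine the two extremal theorems for $C_a$ within the class $\mathscr{C}_a$ with the universal inequalities between dimension notions.

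First I would record the chain of inequalities valid for any compact set, namely
\begin{equation*}
\dim_L C_a \leq \dim_H C_a \leq \dim_A C_a,
\end{equation*}
stated in the introduction of the paper. Next I would invoke Theorem \ref{LowerBdDec}, which gives $\dim_A E \geq \dim_A C_a$ for every $E \in \mathscr{C}_a$, and Theorem \ref{upper bound lower dim}, which gives $\dim_L E \leq \dim_L C_a$ for every $E \in \mathscr{C}_a$.

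Applying these to the hypothesized $E$ with $\dim_L E = \dim_A E = s$ yields
\begin{equation*}
s = \dim_L E \leq \dim_L C_a \qquad\text{and}\qquad \dim_A C_a \leq \dim_A E = s.
\end{equation*}
Chaining with the universal inequalities,
\begin{equation*}
s \leq \dim_L C_a \leq \dim_H C_a \leq \dim_A C_a \leq s,
\end{equation*}
forcing equality throughout, which is exactly the conclusion $\dim_L C_a = \dim_H C_a = \dim_A C_a = s$.

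There is essentially no obstacle here; the content of the corollary is entirely carried by the two extremal theorems of the preceding sections, and the argument fits in a few lines. The only thing to double-check is that Theorems \ref{LowerBdDec} and \ref{upper bound lower dim} are stated in full generality, i.e.\ for arbitrary decreasing summable $a$ without additional doubling or separation hypotheses, since the hypothesis of the corollary does not provide any such restriction on $a$.
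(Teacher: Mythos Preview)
Your proposal is correct and matches the paper's own proof almost verbatim: the paper simply writes the chain $\dim_L E \leq \dim_L C_a \leq \dim_H C_a \leq \dim_A C_a \leq \dim_A E$ (citing ``our theorems'') and observes that the hypothesis forces equality throughout. Your check that Theorems~\ref{LowerBdDec} and~\ref{upper bound lower dim} require only that $a$ be decreasing and summable is also well taken.
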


\begin{proof}
From our theorems,%
\begin{equation*}
\dim _{L}E\leq \dim _{L}C_{a}\leq \dim _{H}C_{a}\leq \dim _{A}C_{a}\leq \dim
_{A}E,
\end{equation*}%
and thus the hypothesis implies we have equality throughout.
\end{proof}

\begin{rem}
In particular, if $E\in $ $\mathscr{C}_{a}$ is Ahlfors regular, then $\dim
_{L}E=\dim _{A}E$ and thus $\dim _{H}E=\dim _{H}C_{a}$, a fact which can
also be shown by other well known arguments.
\end{rem}

\begin{rem}
It would be interesting to understand the structure of the sets $\{\dim
_{A}E:E\in \mathscr{C}_{a}\}$ or $\{\dim _{L}E:E\in \mathscr{C}_{a}\}$ when $%
a$ is not a doubling sequence (or does not satisfy the separation condition $%
\sup s_{j}/s_{j-1}<1/2$ in the second case). It particular, we do not know
if it is possible to obtain the full interval $[0,1]$ in either case, or the
two element set $\{0,1\}$ in the second case.
\end{rem}


\begin{thebibliography}{99}
\bibitem{A1} P. Assouad. 
\newblock {\em Espaces m\'etriques, plongements,
facteurs}. \newblock U.E.R. Math\'ematique, Universit\'e Paris XI, Orsay,
1977. \newblock Th{\`e}se de doctorat, Publications Math{\'e}matiques
d'Orsay, No. 223-7769.

\bibitem{A2} P. Assouad. \newblock \'{E}tude d'une dimension m\'etrique
li\'ee \`a la possibilit\'e de plongements dans {$\mathbf{R}\sp{n}$}. %
\newblock {\em C. R. Acad. Sci. Paris S\'er. A-B}, 288(15):A731--A734, 1979.

\bibitem{BT} A.~S. Besicovitch and S.~J. Taylor. \newblock On the
complementary intervals of a linear closed set of zero {L}ebesgue measure. %
\newblock {\em J. London Math. Soc.}, 29:449--459, 1954.

\bibitem{CMMS} C. Cabrelli, F. Mendivil, U. Molter, and R. Shonkwiler. %
\newblock On the {H}ausdorff {$h$}-measure of {C}antor sets. 
\newblock {\em
Pacific J. Math.}, 217(1):45--59, 2004.

\bibitem{chen2016accessible} C. Chen, M. Wu, and W. Wu. \newblock Accessible
values of Assouad and the lower dimensions of subsets. 
\newblock {\em arXiv
preprint arXiv:1602.02180}, 2016.

\bibitem{Fal} K. Falconer. \newblock {\em Techniques in fractal geometry}. %
\newblock John Wiley \& Sons Ltd., Chichester, 1997.

\bibitem{FHORM} J.~M. Fraser, A.~M. Henderson, E.~J. Olson, and J.~C.
Robinson. \newblock On the {A}ssouad dimension of self-similar sets with
overlaps. \newblock {\em Adv. Math.}, 273:188--214, 2015.

\bibitem{Fraser} J.~M. Fraser. \newblock Assouad type dimensions and
homogeneity of fractals. \newblock {\em Trans. Amer. Math. Soc.},
366(12):6687--6733, 2014.

\bibitem{fraser2015assouad} J.~M. Fraser and T. Orponen. \newblock The
Assouad dimensions of projections of planar sets. 
\newblock {\em arXiv
preprint arXiv:1509.01128}, 2015.

\bibitem{GMS} I. Garcia, U. Molter, and R. Scotto. \newblock Dimension
functions of {C}antor sets. \newblock {\em Proc. Amer. Math. Soc.},
135(10):3151--3161 (electronic), 2007.

\bibitem{HMZ} K.~E. Hare, F. Mendivil, and L. Zuberman. \newblock The sizes
of rearrangements of Cantor sets. \newblock {\em Canad. Math. Bull.},
56(2):354--365, 2013.

\bibitem{Heinonen} J. Heinonen. 
\newblock {\em Lectures on analysis on
metric spaces}. \newblock Universitext. Springer-Verlag, New York, 2001.

\bibitem{larman1967new} D.G. Larman. \newblock A new theory of dimension. %
\newblock {\em Proceedings of the London Mathematical Society},
3(1):178--192, 1967.

\bibitem{larman1967hausdorff} D.G. Larman. \newblock On Hausdorff measure in
finite-dimensional compact metric spaces. 
\newblock {\em Proceedings of the
London Mathematical Society}, 3(2):193--206, 1967.

\bibitem{li2014assouad} W.W. Li, W.X. Li, J.J. Miao, and L.F. Xi. \newblock %
Assouad dimensions of Moran sets and Cantor-like sets. 
\newblock {\em arXiv
preprint ArXiv:1404.4409}, 2014.

\bibitem{luukkainen1998assouad} J. Luukkainen. \newblock Assouad dimension:
antifractal metrization, porous sets, and homogeneous measures. \newblock
\emph{Journal of the Korean Mathematical Society}, 35(1):23--76, 1998.

\bibitem{mackay2011assouad} J. Mackay. \newblock Assouad dimension of
self-affine carpets. 
\newblock {\em Conformal Geometry and Dynamics of the American Mathematical
  Society}, 15(12):177--187, 2011.

\bibitem{mackay2010conformal} J. Mackay and J. Tyson. \newblock Conformal
dimension. \newblock {\em Univ. Lecture Ser}, 54, 2010.

\bibitem{olson2002bouligand} E. Olson. \newblock Bouligand dimension and
almost {L}ipschitz embeddings. \newblock {\em Pacific J. Math.},
202(2):459--474, 2002.

\bibitem{ORS} E. Olson, J. Robinson, and N. Sharples. \newblock Generalised {%
C}antor sets and the dimension of products. 
\newblock {\em Math. Proc.
Cambridge Philos. Soc.}, 160(1):51--75, 2016.
\end{thebibliography}

\end{document}